\numberwithin{equation}{section}
\newtheorem{Theorem}{Theorem}[section]
\newtheorem*{Theorem*}{Theorem}
\newtheorem{Corollary}[Theorem]{Corollary}
\newtheorem{Proposition}[Theorem]{Proposition}
\theoremstyle{definition}
\newtheorem{Definition}[Theorem]{Definition}
\newtheorem{Example}[Theorem]{Example}
\newtheorem{Remark}[Theorem]{Remark}
\DeclareMathOperator\Res{Res}
\DeclareMathOperator\id{id}
\newcommand\Fbinom[2]{\binom{#1}{#2}_{\!\!F}}
\def\lb{\llbracket}
\def\rb{\rrbracket}
\begin{document}

\allowdisplaybreaks

\newcommand{\arXivNumber}{2503.21390}

\renewcommand{\thefootnote}{}

\renewcommand{\PaperNumber}{004}

\FirstPageHeading

\ShortArticleName{Vertex $F$-Algebras and Their Associated Lie Algebra}

\ArticleName{Vertex $\boldsymbol{F}$-Algebras and Their Associated Lie Algebra\footnote{This paper is a~contribution to the Special Issue on Recent Advances in Vertex Operator Algebras in honor of James Lepowsky. The~full collection is available at \href{https://www.emis.de/journals/SIGMA/Lepowsky.html}{https://www.emis.de/journals/SIGMA/Lepowsky.html}}}

\Author{Markus UPMEIER}

\AuthorNameForHeading{M.~Upmeier}

\Address{Department of Mathematics, University of Aberdeen, Fraser Noble Building,\\
Elphinstone Rd, Aberdeen, AB24~3UE, UK}
\Email{\mail{markus.upmeier@abdn.ac.uk}}
\URLaddress{\url{https://www.abdn.ac.uk/people/Markus.Upmeier/}}

\ArticleDates{Received April 18, 2025, in final form January 03, 2026; Published online January 15, 2026}

\Abstract{Vertex $F$-algebras are a deformation of the concept of an ordinary vertex algebra in which the additive formal group law is replaced by an arbitrary formal group law $F$. The main theorem of this paper constructs a Lie algebra from a vertex $F$-algebra -- for the additive formal group law, this extends Borcherds' well-known construction for ordinary vertex algebras. Our construction involves the new concept of an $F$-residue and some other new algebraic concepts, which are deformations of familiar concepts for the special case of an additive formal group law.}

\Keywords{vertex algebras; formal group laws; Lie algebras}

\Classification{17B69; 17B65}

\renewcommand{\thefootnote}{\arabic{footnote}}
\setcounter{footnote}{0}

\section{Introduction and results}
\label{s1}

Vertex algebras, popularized in mathematics by Borcherds \cite{Borc}, provide an algebraic framework for understanding two-dimensional conformal field theories and play a fundamental role in representation theory. A feature of vertex algebras is that their axioms implicitly rely on the additive formal group law $F(z,w)=z+w$. Vertex $F$-algebras, originally introduced by Li~\cite{Li}, generalize vertex algebras by replacing the additive formal group by an arbitrary formal group law $F(z,w)$.

A fundamental theorem of Borcherds \cite{Borc}, which has important applications to representation theory, states that every vertex algebra determines a Lie algebra. The main result of this paper extends this result and shows that every vertex $F$-algebra also determines a Lie algebra. However, the usual construction of the Lie bracket in the vertex algebra setting, via the residue of the state-to-field correspondence, does not directly apply to vertex $F$-algebras. To overcome this difficulty, we introduce a deformation of the concept of a residue, the $F$-residue $\Res_{z=0}^F$. Establishing the Lie algebra structure in the generalized setting requires significantly more technical work than in the classical case.

The following result is our main theorem, and it is proven in Section~\ref{s43}.

\begin{Theorem*}
Let $(V,\mathbbm{1},\mathcal{S},Y)$ be a vertex $F$-algebra \textup(see Definition~\textup{\ref{s3dfn1}}\textup). Then
\[
 [a,b] = \Res_{z=0}^F Y(a,z)b {\rm d}z
\]
defines a Lie bracket on the quotient $V/\sum_{n\geqslant1}\mathcal{S}^{(n)}(V)$.
\end{Theorem*}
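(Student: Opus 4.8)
The plan is to transplant the classical Borcherds argument into the $F$-deformed setting, keeping careful track of how the formal group law $F$ and the translation operators $\mathcal{S}^{(n)}$ enter. Write $\pi\colon V\to\overline V:=V/\sum_{n\geqslant1}\mathcal{S}^{(n)}(V)$ for the quotient map. Since a Lie bracket must be bilinear, antisymmetric, and satisfy the Jacobi identity, and bilinearity is immediate from the linearity of $Y$ and of $\Res_{z=0}^F$, the work splits into three parts: (i) the bracket descends to a well-defined operation on $\overline V$; (ii) $[\bar a,\bar b]=-[\bar b,\bar a]$; and (iii) the Jacobi identity. In the additive case $\mathcal{S}(z)=e^{zT}$ these are exactly the statements that $\Res_z Y(a,z)b=a_{(0)}b$ passes to $V/TV$ and defines a Lie bracket there, the classical proof deducing (ii) from skew-symmetry and (iii) from the Borcherds identity by taking residues. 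My first step is therefore to pin down the $F$-analogues of these two structural identities from Definition~\ref{s3dfn1}, together with the residue computations they feed into.

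For (i) I would treat the two slots separately. For the left slot, the $\mathcal{S}$-covariance axiom should express $Y(\mathcal{S}^{(n)}a,z)$ as an $F$-derivative of $Y(a,z)$, so that $[\mathcal{S}^{(n)}a,b]=\Res_{z=0}^F Y(\mathcal{S}^{(n)}a,z)b\,{\rm d}z$ vanishes because the $F$-residue of an $F$-derivative is zero; this is precisely the defining feature one wants the $F$-residue to have, generalizing $\Res_z\partial_z=0$. For the right slot, I would use $\mathcal{S}$-covariance to commute $\mathcal{S}^{(n)}$ past $Y(a,z)$, producing $\mathcal{S}^{(m)}$-terms (which lie in $\ker\pi$) together with an $F$-derivative (killed by the residue); hence $[a,\mathcal{S}^{(n)}b]\in\sum_{m\geqslant1}\mathcal{S}^{(m)}(V)$ for $n\geqslant1$. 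Antisymmetry (ii) then follows from $F$-skew-symmetry, which I expect to read $Y(a,z)b=\mathcal{S}(z)\,Y(b,\iota_F(z))a$ with $\iota_F$ the formal inverse for $F$: applying $\Res_{z=0}^F(\cdots)\,{\rm d}z$ and writing $\mathcal{S}(z)=\id+\sum_{n\geqslant1}\mathcal{S}^{(n)}z^n$, the higher terms are absorbed into the quotient, while an $F$-change-of-variables formula for the residue under $z\mapsto\iota_F(z)$ converts the leading term into $-[b,a]$ modulo $\ker\pi$.

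The Jacobi identity (iii) is where the real work lies, and I expect it to be the main obstacle. The template is the classical deduction in which one applies a double residue $\Res_z\Res_w$ to the three-term Borcherds identity and uses the commutator formula for modes to rewrite $[[\,a,b\,],c]$ in a form symmetric enough to cancel cyclically. In the $F$-setting the difficulty is that the rational kernels $(z-w)^{-1}$ of the classical argument are replaced by $F$-deformed expressions built from $F(z,\iota_F(w))$, expanded in different domains, and the $F$-residue does not interact with substitutions as transparently as the ordinary residue does. The crux is thus a careful $F$-residue calculus: an $F$-analogue of the iterated-residue (Fubini) property, compatibility of $\Res_{z=0}^F$ with the $F$-delta distribution and with the change of variables supplied by $F$ and $\iota_F$, and the vanishing of $F$-residues of $F$-derivatives already used in (i). Granting these, I would expand the $F$-Borcherds identity, extract the relevant $F$-residues, and verify that the three cyclic terms of the Jacobi identity cancel modulo $\sum_{n\geqslant1}\mathcal{S}^{(n)}(V)$. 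Assembling the correct package of $F$-residue identities, rather than any single hard estimate, is the principal obstacle.
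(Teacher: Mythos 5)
Your outline follows the paper's proof essentially step for step: well-definedness on the quotient via the vanishing of $F$-residues of $F$-hyperderivatives (for the left slot) and commuting $\mathcal{S}(w)$ past $Y(a,z)$ via translation covariance (for the right slot), antisymmetry from the skew-symmetry axiom together with $\Res_{z=0}^F f(\iota(z))\,{\rm d}z=-\Res_{z=0}^F f(z)\,{\rm d}z$, and the Jacobi identity from a double $F$-residue applied to the three-term associativity/commutativity structure. The ``package of $F$-residue identities'' you correctly identify as the crux is exactly what the paper supplies, most notably the three-term iterated $F$-residue identity (its equation \eqref{s2eqn12}, proved by reducing to the additive group law via Lazard's theorem), so your plan is the paper's argument in all essentials.
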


Besides the potential applications to representation theory, there is further motivation for generalizing the concept of a vertex algebra stemming from enumerative geometry: recently, Joyce~\cite{Joy2,Joy} has shown that vertex algebras play a central role in enumerative geometry, where the Lie algebra associated to the ordinary homology $H_*(\mathcal{M})$ of a moduli stack $\mathcal{M}$ (for example, the stack of perfect complexes over a projective variety). The Lie bracket is used to formulate wall-crossing formulas of enumerative invariants. The study of enumerative invariants in generalized homology $E_*(\mathcal{M})$ is becoming increasingly popular (most notably, the theory of $K$-theoretic enumerative invariants). For complex oriented generalized homology theories~$E_*$ with formal group law $F$, similar constructions show that $E_*(\mathcal{M})$ is naturally a vertex $F$-algebra. Indeed, in~\cite{GroUp} the author and Gross proved that the generalized homology of an $H$-space (plus some extra data) naturally has a vertex $F$-algebra structure. This extends Joyce's construction to generalized homology. While a comprehensive theory of wall-crossing has not yet been developed for generalized enumerative invariants (for progress in the case of $K$-theory, see Liu~\cite{Liu}), it may well be expected that the wall-crossing formula will use the Lie bracket constructed in this paper.

The proof of our main theorem, as well as a better understanding of vertex $F$-algebras, requires the development of various deformations of familiar concepts ($F$-binomial coefficients, $F$-delta distributions, $F$-residues, and $F$-hyperderivatives), developed in Section~\ref{s3}. These constructions, which appear to be new to the literature, provide the tools for generalizing results for ordinary vertex algebras to vertex $F$-algebras. Section~\ref{s2} reviews some background of formal groups and establishes certain properties we will use later. In Section~\ref{s4}, we define vertex $F$-algebras (our definition is equivalent to that of Li~\cite{Li}), prove some meromorphicity properties, and prove our main theorem.

We use the following notation.
\begin{itemize}\itemsep=0pt
\item
$R$ a commutative ring with unit,
\item
$R\big[z^{\pm1}\big]$ the ring of Laurent polynomials,
\item
$R\lb z\rb$ the ring of formal power series $\sum_{i=0}^\infty a_i z^i$,
\item
$R\big\lb z^{\pm1}\big\rb$ the abelian group of bilateral Laurent series $\sum_{i=-\infty}^{+\infty} a_i z^i$ (note that $R\big\lb z^{\pm1}\big\rb$ is \emph{not} a ring, but only has a partially defined product),
\item
$R(\!( z)\!)$ the ring of meromorphic series having a pole at $0$.
\end{itemize}

\section{Background on formal groups}
\label{s2}

All rings $R$ are assumed to be commutative and unital. A \emph{formal group law} over $R$ is a formal power series $F(z,w)\in R\lb z,w\rb$ satisfying
\begin{align}
 F(z,w)=F(w,z)=z+w+O(zw),
\qquad
F(z,F(w,v))=F(F(z,w),v).\label{s1eqn1}
\end{align}
There is a unique \emph{inverse} $\iota(z)=-z+O\bigl(z^2\bigr)\in R\lb z\rb$ such that $F(z,\iota z)=0$.

\begin{Remark}
Our definition includes a choice of coordinate $z$ for the formal group.
\end{Remark}

\begin{Example}
Over $R=\mathbb{Z}[s]$, we have the formal group law
\begin{equation}\label{OneParameterFGL}
 F_s(z,w)=z+w+s\cdot zw.
\end{equation}
We view $F_s$ as a family of group laws over the affine line. At $s=0$, it specializes to the \emph{additive group law} $F_a$ and at $s=1$ to the \emph{multiplicative group law} $F_m$. All polynomial formal group laws are of the form \eqref{OneParameterFGL}, and other examples must involve infinite series.
\end{Example}

\begin{Example}
Over the ring of modular forms $R=\mathbb{Z}\big[\frac12,\epsilon,\delta\big]$, we have the \emph{elliptic formal group~law}
\[
 F_{ell}(z,w)=\frac{z\sqrt{S(w)}+w\sqrt{S(z)}}{1-\epsilon z^2w^2},\qquad S(z)=1-2\delta z^2+\epsilon z^4.
\]
For $\epsilon=\delta^2$ it specializes to the group law
\smash{$
 \frac{z+w}{1+\delta zw}
$}
(for $\delta=1$ equivalent to $L$-genus) and for $\epsilon=0$ it specializes to
\[
 z\sqrt{1-2\delta w^2}+w\sqrt{1-2\delta z^2}
\]
(for $\delta=-1/2$ equivalent to $\hat{A}$-genus).
\end{Example}

\begin{Example}
\label{PrimeFGL}
 Let $p$ be a prime number and fix a positive power $q=p^h$. Set $\phi_p(z)=z+\sum_{n\geqslant1}p^{-n}z^{q^n}$. In \cite{Haze}, it is shown that
\[
 F_p(z,w)=\phi_p^{-1}(\phi_p(z)+\phi_p(w))
\]
 has integer coefficients and thus defines a formal group law over $\mathbb{Z}$.
\end{Example}

Using the notation $F^{m,n}=\frac{\partial^{m+n}}{\partial z^m\partial w^n}F$ for derivatives, the axioms imply
\begin{align*}
 F^{m,n}(z,w)=F^{n,m}(w,z),\qquad
 F^{m,0}(z,0)=F^{0,m}(0,z)=\delta_{m,1}.
\end{align*}

The associativity law is the most interesting axiom. By differentiating it, one obtains many complicated identities, for example
\begin{equation}
\label{ComplicatedIdentity}
F^{0,1}(z,w)F^{1,0}(0,w)=F^{1,0}(z,w)F^{0,1}(z,0).
\end{equation}
Identities of this kind will be important later, so we now introduce a systematic method for proving these based on Lazard's theorem. We first discuss the invariant $1$-form and the logarithm of a formal group law.

The \emph{invariant differential} of a formal group law $F(z,w)$ is the unique $1$-form
\begin{equation}\label{Invariant_One_Form}
 \theta_F=p_F(z){\rm d}z,\qquad p_F(z)\in R\lb z\rb,
\end{equation}
satisfying
$
 p_F(0)=1$, $ F^*(\theta_F)=\pi_1^*(\theta_F)+\pi_2^*(\theta_F)$.
Equivalently,
\begin{equation}
\label{Invariant_One_Form_Equivalent_Char}
 p_F(F(z,w))\bigl(F^{1,0}(z,w){\rm d}z+F^{0,1}(z,w){\rm d}w\bigr)=p_F(z){\rm d}z+p_F(w){\rm d}w.
\end{equation}
Using this characterization of $\theta_F$ one verifies
\begin{equation}
\label{Invariant_Form_Inverse}
 -\iota^*(\theta_F)=\theta_F.
\end{equation}
One checks $p_F(z)=F^{0,1}(z,0)^{-1}$, which is well-defined as $F^{0,1}(z,0)=1+\cdots$.

Suppose now that $\mathbb{Q}\subset R$. Then every formal group law has a unique (formal) \emph{logarithm} $\phi\in R\lb z\rb$ satisfying
\begin{align}
\label{s1eqn8}
 \phi(F(z,w))=\phi(z)+\phi(w), \qquad\phi(0)=0, \qquad\phi'(0)=1.
\end{align}
Indeed, taking $\partial/\partial w$ of \eqref{s1eqn8} and setting $w=0$ we find that $\phi'(z){\rm d}z=\theta_F$ is the invariant $1$-form which, since $\mathbb{Q}\subset R$, has a primitive $\theta_F=d\phi$. For this reason, we write $p_F(z)=\phi'(z)$ with the caveat that $\phi(z)$ is only defined if $\mathbb{Q}\subset R$.
The composition inverse $\phi^{-1}(x)\in R\lb x\rb$ is called the \emph{exponential}.

\begin{Example}
For $s\neq0$, the logarithm of $F_s$ is $s^{-1}\log(1+sz)$. For $F_{ell}$, the logarithm is the functional inverse of the indefinite elliptic integral $\int S(z)^{-1/2}{\rm d}z$.
\end{Example}

\begin{Example}
 The logarithm in Example~\ref{PrimeFGL} is the series $\phi(z)$ defined there. For $p=q$, we have
 \[
 p_F(z)^{-1}=F^{0,1}(z,0)=\frac{1}{\phi'(z)}=\prod_{k=1}^\infty\frac{1}{1+z^{p^k}}.
 \]
 Hence the coefficient of $z^d$ in $p_F(z)^{-1}$ is the sum of $(-1)^k$ over all partitions $p^{n_1}+\dots+p^{n_k}=d$ into positive powers of the prime~$p$.
\end{Example}

Using variables as coefficients defines the \emph{universal formal group law}
\[
F_L=\sum_{n,m\geqslant0}a_{n,m}z^nw^m
\]
over the \emph{Lazard ring} $L$, the quotient of the polynomials $\mathbb{Z}[a_{n,m}]$ modulo all the relations contained in \eqref{s1eqn1}. By construction, every formal group law $F$ is obtained from $F_L$ by reduction of coefficients $F=u_*(F_L)$ along a unique ring homomorphism $u\colon L\to R$.

\begin{Theorem}[{Lazard~\cite{Laz}}]
$L\cong\mathbb{Z}[p_1,p_2,\dots]$ is a polynomial ring over the integers. In particular, $L$ is torsion-free.
\end{Theorem}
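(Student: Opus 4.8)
The plan is to reduce Lazard's theorem to a statement about symmetric $2$-cocycles and then bootstrap from the rational case. First I would equip $L$ with its standard grading $\deg a_{n,m}=n+m-1$, under which $L$ is connected ($L_0=\mathbb{Z}$ and $L_d=0$ for $d<0$); assigning $\deg z=\deg w=-1$ makes $F_L$ homogeneous of degree $-1$. For a connected graded ring, any lift of a generating set of the indecomposables $QL=\bigoplus_{d\geq1}QL_d$, where $QL_d=L_d/\bigl((L^+)^2\bigr)_d$ and $L^+=\bigoplus_{d\geq1}L_d$, generates $L$. Reducing the commutativity and associativity axioms \eqref{s1eqn1} modulo decomposables shows that $\Gamma_d(z,w)=\sum_{n+m=d+1}\bar a_{n,m}z^nw^m$, with coefficients $\bar a_{n,m}\in QL_d$, is a homogeneous symmetric $2$-cocycle of polynomial degree $d+1$, namely $\Gamma_d(z,w)=\Gamma_d(w,z)$ and $\Gamma_d(z,w)+\Gamma_d(z+w,u)=\Gamma_d(z,w+u)+\Gamma_d(w,u)$. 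Moreover $\Gamma_d$ is the universal such cocycle, so $QL_d$ represents the functor $R\mapsto\{\text{symmetric $2$-cocycles of polynomial degree }d+1\text{ over }R\}$.

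The technical heart, and the step I expect to be the main obstacle, is Lazard's comparison lemma: for every $n\geq2$ and every ring $R$, the symmetric $2$-cocycles of polynomial degree $n$ over $R$ form a free $R$-module of rank one, generated by
\[
 C_n(z,w)=\tfrac1{d_n}\bigl((z+w)^n-z^n-w^n\bigr),\qquad d_n=\begin{cases}p,& n=p^k\ \text{a prime power},\\ 1,&\text{otherwise.}\end{cases}
\]
I would prove this in two steps. Note first that setting $u=0$ in the cocycle condition already forces $\Gamma(z,0)=0$, so no normalization need be imposed. Solving the remaining linear symmetry and cocycle conditions on the coefficients then shows every such cocycle is a scalar multiple of $(z+w)^n-z^n-w^n$; the arithmetic input is the classical fact that $\gcd\bigl\{\binom{n}{i}:0<i<n\bigr\}=d_n$, which makes the integer coefficient vector of $C_n$ primitive. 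Primitivity does double duty: it forces $C_n$ (rather than a proper fraction of it) to be the integral generator, and --- because a primitive vector spans a saturated, hence a direct, summand of $\mathbb{Z}^{n-1}$ --- it guarantees the rank-one conclusion is preserved under arbitrary base change $\mathbb{Z}\to R$. By Yoneda, representability then gives $QL_d\cong\mathbb{Z}$ for every $d\geq1$.

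With the comparison lemma in hand, I would finish as follows. Choose $p_d\in L_d$ lifting a generator of $QL_d\cong\mathbb{Z}$; since these generate the indecomposables, the induced graded ring homomorphism $\theta\colon\mathbb{Z}[p_1,p_2,\dots]\to L$ is surjective. To see it is injective I pass to $\mathbb{Q}$. Over any $\mathbb{Q}$-algebra a formal group law is equivalent data to its logarithm $\phi(z)=z+\sum_{i\geq2}\ell_iz^i$ (any normalized series, by the discussion of \eqref{s1eqn8}), so the functor of formal group laws is represented by the polynomial ring $\mathbb{Q}[\ell_2,\ell_3,\dots]$; hence $L\otimes\mathbb{Q}\cong\mathbb{Q}[\ell_2,\ell_3,\dots]$, a graded polynomial ring with exactly one generator in each degree $d\geq1$. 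Both $\mathbb{Q}[p_1,p_2,\dots]$ and $L\otimes\mathbb{Q}$ therefore have equal finite dimension in each degree, so the surjection $\theta\otimes\mathbb{Q}$ is an isomorphism. Finally, applying the flat functor $-\otimes\mathbb{Q}$ to $0\to\ker\theta\to\mathbb{Z}[p_1,p_2,\dots]\xrightarrow{\theta}L\to0$ gives $\ker\theta\otimes\mathbb{Q}=\ker(\theta\otimes\mathbb{Q})=0$; as $\ker\theta$ is a subgroup of the torsion-free group $\mathbb{Z}[p_1,p_2,\dots]$, it vanishes. Thus $\theta$ is an isomorphism, $L\cong\mathbb{Z}[p_1,p_2,\dots]$, and in particular $L$ is torsion-free.
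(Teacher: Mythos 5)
The paper does not prove this theorem at all: it is imported as a black box with a citation to Lazard, so there is no in-paper argument to compare against. Your outline is the standard route (grade $L$, identify $QL_d$ with symmetric $2$-cocycles, invoke the comparison lemma, then pin down injectivity rationally via the logarithm), and the framing steps are sound: the grading, the identification of $QL_d$ as the universal target for symmetric $2$-cocycles of degree $d+1$, the graded-Nakayama surjectivity, the count $\dim_{\mathbb{Q}}(L\otimes\mathbb{Q})_d=\dim_{\mathbb{Q}}\mathbb{Q}[p_1,p_2,\dots]_d=p(d)$ coming from $L\otimes\mathbb{Q}\cong\mathbb{Q}[\ell_2,\ell_3,\dots]$, and the flatness/torsion-freeness argument killing $\ker\theta$ are all correct.

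The genuine gap is in your proof of the comparison lemma, which you rightly call the technical heart. First, the assertion that every symmetric $2$-cocycle is a scalar multiple of $(z+w)^n-z^n-w^n$ is literally false over $\mathbb{F}_p$ when $n=p^k$, since that polynomial vanishes identically there while $\bar C_n$ does not; so whatever coefficient computation you have in mind can at best be a rational or integral statement. Second, and more seriously, primitivity of $C_n$ does not give base change. The cocycles over $R$ are $\ker(M\otimes R)$ for an integer matrix $M$ encoding the linear symmetry and cocycle conditions, and kernels of integer matrices do not commute with $-\otimes R$: they can strictly grow modulo $p$ whenever $\operatorname{coker}(M)$ has $p$-torsion. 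Primitivity of $C_n$ only guarantees that $R\cdot C_n\hookrightarrow R^{n-1}$ is a split injection, i.e., that $R\cdot C_n$ is \emph{contained} in the cocycle module as a direct summand of the ambient space; it says nothing about there being no further cocycles after reduction. Ruling those out --- in particular showing that modulo $p$ with $n=p^k$ the cocycle space is still only one-dimensional, spanned by $\bar C_n$ --- is exactly the hard combinatorial content of Lazard's lemma (handled in the literature by Lucas/Kummer-type binomial arguments, symmetric functions, or the classification of additive polynomials in characteristic $p$), and your proposal does not address it. Until that case is supplied, $QL_d\cong\mathbb{Z}$ is not established and the rest of the argument has nothing to stand on.
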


This implies that it suffices to prove statements about formal group laws over the rationals, where one may restrict to laws of the form
\begin{equation}\label{s1eqn9}
 F(z,w)=\phi^{-1}(\phi(z)+\phi(w)).
\end{equation}
Using this method, \eqref{ComplicatedIdentity} has a simple chain rule proof. Moreover, the following proposition would be very difficult to prove without this new method.

\begin{Proposition}
 The series $G(z,w)\in R\lb z,w\rb$ defined by
\begin{equation}\label{differByUnit}
F(z,\iota w)=G(z,w)\cdot(z-w)
\end{equation}
is a unit and converges on the diagonal to
\begin{equation}
\label{Gdiagonal}
 G(z,z)=\phi'(z).
\end{equation}
\end{Proposition}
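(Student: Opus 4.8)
I would argue in three stages: existence of the power series $G$, the unit property, and the diagonal value, with only the last requiring real work. For existence, the defining property of the inverse gives $F(w,\iota w)=0$, so the series $H(z,w)=F(z,\iota w)\in R\lb z,w\rb$ vanishes under the substitution $z=w$. I would then invoke the standard divided-difference factorization: writing $H(z,w)-H(w,w)=(z-w)\widetilde H(z,w)$ with $\widetilde H\in R\lb z,w\rb$ (the quotient is a genuine power series because each monomial of $\widetilde H$ receives contributions from only finitely many monomials of $H$, namely those of one higher total degree), and using $H(w,w)=F(w,\iota w)=0$, one obtains $F(z,\iota w)=(z-w)G(z,w)$ with $G=\widetilde H$.

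For the unit property I would simply read off the leading term. Since $F(z,w)=z+w+O(zw)$ and $\iota(w)=-w+O(w^2)$, we have $F(z,\iota w)=(z-w)+(\text{total degree}\geq2)$, so $G(0,0)=1$. As a power series over $R$ is invertible exactly when its constant term is a unit, $G$ is a unit in $R\lb z,w\rb$.

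The diagonal value is the substantive claim, and here I would follow the Lazard-based method set up above. I would differentiate $F(z,\iota w)=(z-w)G(z,w)$ in $w$ and set $w=z$; the right-hand side collapses to $-G(z,z)$. To evaluate the left-hand side I would reduce to the rational case via Lazard's theorem: since $L$ is torsion-free it embeds into $L\otimes\mathbb{Q}$, so it suffices to prove the identity for the universal law over a $\mathbb{Q}$-algebra, where $F(z,w)=\phi^{-1}(\phi(z)+\phi(w))$. There $\phi(\iota w)=-\phi(w)$ yields $F(z,\iota w)=\phi^{-1}(\phi(z)-\phi(w))$, and the chain rule gives $\partial_w\phi^{-1}(\phi(z)-\phi(w))\big|_{w=z}=(\phi^{-1})'(0)\cdot(-\phi'(z))=-\phi'(z)$, using $(\phi^{-1})'(0)=1/\phi'(0)=1$. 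Comparing the two sides gives $G(z,z)=\phi'(z)=p_F(z)$ over every $\mathbb{Q}$-algebra, and since both $G(z,z)$ and $p_F(z)$ are natural under reduction of coefficients $u\colon L\to R$, the identity descends to arbitrary $R$.

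The main obstacle I anticipate is precisely this last descent: over a general ring $R$ the logarithm $\phi$ need not exist, so the clean computation only happens rationally, and the real care lies in verifying that $\iota$, the quotient $G$, and $p_F=F^{0,1}(z,0)^{-1}$ are all compatible with base change, so that the rational identity transports back along $u\colon L\to R$. By comparison, the existence and unit statements are essentially formal.
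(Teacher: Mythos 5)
Your proposal is correct and follows essentially the same strategy as the paper: reduce to the rational case via Lazard's theorem, where $F(z,\iota w)=\phi^{-1}(\phi(z)-\phi(w))$, and compute $G(z,z)$ there, with the existence and unit claims handled by an elementary coefficient argument. The only difference is cosmetic: the paper factors $G(z,w)=\psi(\phi(z)-\phi(w))\cdot\frac{\phi(z)-\phi(w)}{z-w}$ with $\psi(x)=\phi^{-1}(x)/x$ and evaluates each factor on the diagonal, whereas you differentiate the defining identity in $w$ and set $w=z$; both are valid, and your explicit attention to the descent along $u\colon L\to R$ makes precise a step the paper leaves implicit.
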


\begin{proof}
 The coefficients of $G(z,w)$ in \eqref{differByUnit} satisfy a recursion which is easily solved inductively. The main point is to prove \eqref{Gdiagonal} for the group law \eqref{s1eqn9}. Set
 \[
 \psi(x)=\frac{\phi^{-1}(x)}{x}=1+O(x).
 \]
 Then
 \[
 G(z,w)=\frac{\phi^{-1}(\phi(z)-\phi(w))}{z-w}=\psi(\phi(z)-\phi(w))\cdot \frac{\phi(z)-\phi(w)}{z-w}.
 \]
 Substituting $w=z$, the first factor is $\psi(0)=1$ and the second is $\phi'(z)$.
\end{proof}

\section[Formal calculus and F-residues]{Formal calculus and $\boldsymbol{F}$-residues}
\label{s3}

\subsection{Bilateral and Laurent series}
\label{s31}

Let $R\big\lb z_1^{\pm1},\dots,z_n^{\pm1}\big\rb$ be the space of \emph{bilateral Laurent series}
\begin{equation}
\label{bilateral}
 f=\sum_{i_1,\dots,i_n\in\mathbb{Z}} a_{i_1,\dots,i_n} z_1^{i_1}\cdots z_n^{i_n}
\end{equation}
in variables $z_1, \dots, z_n$. This is an abelian group under addition, but the product is only partially defined. The product of \eqref{bilateral} with
\[
 g=\sum_{j_1,\dots,j_n\in\mathbb{Z}} b_{j_1,\dots,j_n} z_1^{j_1}\cdots z_n^{j_n}
\]
is said to \emph{converge} if each of the coefficients in
\[
 fg = \sum_{k_1,\dots,k_n\in\mathbb{Z}} \Biggl(\sum_{i_1+j_1=k_1,\dots, i_n+j_n=k_n}a_{i_1,\dots,i_n}b_{j_1,\dots,j_n}\Biggr)z_1^{k_1}\cdots z_n^{k_n}
\]
reduces to a sum with only finitely many non-zero terms. By \cite[p.\ 24]{LL}, this product is associative if all products $fg$, $gh$, $(fg)h$, $f(gh)$ converge and the triple product converges.

Similarly, we say that $f(z,w)=\sum_{i,j\in\mathbb{Z}} a_{i,j}w^iz^j\in R\lb z,w\rb$ \emph{converges on the diagonal} if each of the coefficients in
\[
 f(z,z)=\sum_{n\in\mathbb{Z}} \Biggl(\sum_{i+j=n} a_{i,j}\Biggr)z^n
\]
reduces to a sum with only finitely many non-zero terms.

We say that \eqref{bilateral} is a \emph{formal Laurent series} if $a_{i_1,\dots, i_n}=0$ for all but finitely many negative indices. The subspace of formal Laurent series
$
 R(\!( z_1,\dots,z_n)\!)
$
is a ring since all products converge. For $f\in R(\!( z_1,\dots,z_n)\!) $, we say also that $f$ is \emph{meromorphic} in the variables $z_1,\dots, z_n$. We have the subring
$
 R\lb \underline{z}_1,\dots,\underline{z}_n\rb
$
of \emph{formal power series} which are also said to be \emph{holomorphic} in $\underline{z}_1,\dots, \underline{z}_n$. It is useful to underline holomorphic variables in some contexts below.

\begin{Remark}
This generalizes to series with coefficients in an $R$-module $M$. Then $R(\!( z_1,\dots, z_n)\!)$ acts on $M(\!( z_1,\dots, z_n)\!)$. We leave this extension to the reader.
\end{Remark}

\begin{Proposition}
\label{Laurent-invbar}
 Let $f=\sum_{n\geqslant N} a_nz^n\in R(\!( z)\!) $ with lowest coefficient $a_N\neq0$. Then $f$ is invertible in $R(\!( z)\!) $ if and only if $a_N$ is invertible in~$R$, with inverse in $z^N R\lb z\rb$. In particular, all integer powers $f^n\in R(\!( z)\!) $ are defined in this case.
\end{Proposition}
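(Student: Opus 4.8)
The plan is to reduce everything to the classical invertibility criterion for formal power series in $R\lb z\rb$ by factoring out the leading monomial. Write $f=z^N\tilde f$ with $\tilde f=\sum_{n\geqslant0}a_{N+n}z^n\in R\lb z\rb$ and $\tilde f(0)=a_N$. Multiplication by $z^N$ is an $R$-linear bijection of $R\big\lb z^{\pm1}\big\rb$ (with inverse multiplication by $z^{-N}$) that preserves the subspace $R(\!(z)\!)$ of series with finitely many negative terms. Since all products in $R(\!(z)\!)$ converge, so that $R(\!(z)\!)$ is a ring, the element $f$ has an inverse lying in $z^{-N}R\lb z\rb$ if and only if $\tilde f$ has an inverse lying in $R\lb z\rb$, and then $f^{-1}=z^{-N}\tilde f^{-1}$. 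This reduces the proposition to the case $N=0$: namely, that $\tilde f\in R\lb z\rb$ is invertible in $R\lb z\rb$ if and only if its constant term $a_N$ is a unit of~$R$.

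For the ``if'' direction I would construct the inverse $\tilde f^{-1}=\sum_{m\geqslant0}c_mz^m$ directly by solving the convolution equations coming from $\tilde f\cdot\tilde f^{-1}=1$. The degree-$0$ equation forces $c_0=a_N^{-1}$, which is exactly where the hypothesis that $a_N$ is a unit enters, and for $m\geqslant1$ the degree-$m$ equation $\sum_{k=0}^m a_{N+k}c_{m-k}=0$ is solved recursively by $c_m=-a_N^{-1}\sum_{k=1}^m a_{N+k}c_{m-k}$, determining all coefficients uniquely in~$R$. This produces a genuine element of $R\lb z\rb$, so that $f^{-1}=z^{-N}\tilde f^{-1}\in z^{-N}R\lb z\rb$.

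For the ``only if'' direction, suppose $\tilde f h=1$ for some $h=\sum_{m\geqslant0}c_mz^m\in R\lb z\rb$, equivalently $f^{-1}=z^{-N}h\in z^{-N}R\lb z\rb$. Reading off the constant coefficient of $\tilde f h$ gives $a_N c_0=1$, so $a_N$ is a unit of $R$ with inverse $c_0$. Finally, the ``in particular'' claim is immediate: positive powers $f^n$ exist because $R(\!(z)\!)$ is a ring, and negative powers are defined as $f^n=(f^{-1})^{-n}$ once $f^{-1}\in R(\!(z)\!)$ is available.

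I expect the only genuinely delicate point to be the bookkeeping in the reduction, namely insisting that the inverse have the prescribed shape $z^{-N}R\lb z\rb$ rather than merely existing somewhere in $R(\!(z)\!)$. Over a general commutative ring an element of $R(\!(z)\!)$ whose leading coefficient $a_N$ is a zero divisor can still happen to be invertible in $R(\!(z)\!)$, so it is precisely the fact that the inverse lies in $z^{-N}R\lb z\rb$ that carries the arithmetic content, and the constant-term identity $a_Nc_0=1$ is what pins down the unit.
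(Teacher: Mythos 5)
Your proof is correct, but it takes a genuinely different route from the paper's. The paper factors $f=a_Nz^N(1+zg)$ and formally applies Newton's binomial theorem to define \emph{every} integer power $f^n=a_N^n\sum_{k\geqslant0}\binom{n}{k}z^{nN+k}g^k$ at once, verifies the exponent law $f^n\cdot f^m=f^{n+m}$ via the Vandermonde identity $\sum_{j=k+\ell}\binom{n}{k}\binom{m}{\ell}=\binom{n+m}{j}$, and reads off invertibility from the case $n=1$, $m=-1$. You instead strip off $z^N$, build the inverse of the unit-constant-term series $\tilde f$ by the standard coefficient recursion, and recover negative powers as powers of $f^{-1}$. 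The paper's version buys an explicit closed formula for $f^n$, which it reuses later when discussing expansions such as $i_{z,w}(z\pm w)^n$; your version is more elementary and, unlike the paper's proof, actually addresses the ``only if'' direction, which the paper silently omits. Your closing observation is exactly the right subtlety: over a ring with zero divisors an element whose lowest coefficient is not a unit can still be invertible in $R(\!( z)\!)$ --- for instance $(2+z)\bigl(2z^{-2}+z^{-1}\bigr)=1$ over $\mathbb{Z}/4$ --- so the converse is only true once one insists that the inverse lie in $z^{-N}R\lb z\rb$, precisely as you prove it. (This also confirms that the ``$z^NR\lb z\rb$'' in the statement should read $z^{-N}R\lb z\rb$, consistent with your argument and with the paper's own conclusion $f^n\in z^{nN}R\lb z\rb$ at $n=-1$.)
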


\begin{proof}
 Suppose that $a_N$ is invertible and factor $f=a_Nz^N(1+zg)$ for $g\in R\lb z\rb$. Formally applying Newton's binomial theorem, we define
 \begin{equation}
 \label{eqn:newton-inverse}
 f^n=a_N^nz^{nN}(1+zg)^n=a_N^n\sum_{k=0}^\infty\binom{n}{k}z^{nN+k} g^k.
 \end{equation}
 Clearly, $f^n\cdot f^m = f^{n+m}$ by the binomial identity $\sum_{j=k+\ell}\binom{n}{k}\binom{m}{\ell}=\binom{n+m}{j}$ and $f^n\in z^{nN}R\lb z\rb$. Putting $n=1$, $m=-1$, shows that $f(z)$ is invertible.
\end{proof}

\subsection{Expansions}
\label{s32}

Expansion maps play a central role in modern formulations of vertex algebras, so we briefly review them here.

\begin{Definition}
Let $(x_1, \dots, x_m), \dots, (z_1, \dots, z_n)$ be tuples of formal variables, where singleton brackets will be dropped from the notation. We view the iterate Laurent series ring as a subset of the space of bilateral series:
\[
 R(\!( x_1,\dots, x_m)\!)\cdots(\!( z_1,\dots, z_n)\!)\subset R\big\lb x_1^{\pm1},\dots,x_m^{\pm1},\dots, z_1^{\pm1},\dots,z_n^{\pm1}\big\rb.
\]
Let $j_{(x_1,\dots, x_m),\dots, (z_1,\dots, z_n)}$ be the natural inclusion of $R(\!( x_1,\dots,x_m,\dots,z_1,\dots,z_n)\!)$, the un-iterated Laurent ring, into the ring $R(\!( x_1,\dots, x_m)\!)\cdots(\!( z_1,\dots, z_n)\!)$. Moreover, let $S_{(x_1,\dots, x_m),\dots, (z_1,\dots, z_n)}\subset R(\!( x_1,\dots,x_m,\dots,z_1,\dots,z_n)\!)$ be the multiplicative set consisting of those Laurent series whose image under $j_{(x_1,\dots, x_m),\dots, (z_1,\dots, z_n)}$ is invertible in the iterated Laurent ring.

The \emph{expansion map} $i_{(x_1,\dots, x_m),\dots, (z_1,\dots, z_n)}$ is the localization of the map $j_{(x_1,\dots, x_m),\dots, (z_1,\dots, z_n)}$ at~${S_{(x_1,\dots, x_m),\dots, (z_1,\dots, z_n)}}$ shown in the diagram
\[
\begin{tikzcd}[column sep=-7ex]
\hskip-3cm R(\!( x_1,\dots,x_m,\dots,z_1,\dots,z_n)\!)\dar\arrow[r,"j_{(x_1,\dots, x_m),\dots, (z_1,\dots, z_n)}" yshift=1.5ex]& R(\!( x_1,\dots, x_m)\!)\cdots(\!( z_1,\dots, z_n)\!)\\
S_{(x_1,\dots, x_m),\dots, (z_1,\dots, z_n)}^{-1}R(\!( x_1,\dots,x_m,\dots,z_1,\dots,z_n)\!).\arrow[ru,dashed,"i_{(x_1,\dots, x_m),\dots, (z_1,\dots, z_n)}"' xshift=1ex]
\end{tikzcd}
\]
\end{Definition}

\begin{Example}
\label{Ex_Main_Example_Expansions}
Let $F(z,w)$ be a formal group law. The images of the series $F(z,w)$, $F(z,\iota w)$, $F(\iota z,w)$, $\iota F(z,w)$ in $R(\!( z)\!)(\!( w)\!)$ are invertible because their lowest coefficients are the units $z,\iota(z)$ in $R(\!( z)\!)$. Therefore, $F(z,w)$, $F(z,\iota w)$, $F(\iota z,w)$, $\iota F(z,w)$ are in $S_{z,w}$. Hence there are well-defined integer powers
\[
 i_{z,w}F(z,w)^n,\qquad i_{z,w}F(\iota z,w)^n,\qquad i_{z,w}F(z,\iota w)^n,\qquad i_{z,w}(\iota F(z,w))^n,\qquad \forall n\in\mathbb{Z},
\]
which are elements of $R(\!( z)\!)(\!( w)\!)\subset R\big\lb z^{\pm1}, w^{\pm 1}\big\rb$. These are computed by first viewing $F(z,w)$, $F(z,\iota w)$, $F(\iota z,w)$, $\iota F(z,w)$ as elements of the iterated ring $R(\!( z)\!)(\!( w)\!)$ and then forming the $n$-th power there.
\end{Example}

\begin{Remark}
For ordinary vertex algebras, the expansion maps $R(\!( z,w)\!)[z-w]^{-1}\to R(\!( z)\!)(\!( w)\!)$ are defined on the localization by a single element $z-w$. For vertex $F$-algebras, it becomes necessary to localize $F(z,w)$, $F(z,\iota w)$, $F(\iota z,w)$, $\iota F(z,w)$. In the case of several variables, even more complicated expressions in $F$ must be localized (for example, in the proof of Proposition~\ref{PropJacobiDelta}). Since these expressions are difficult to list systematically, we define the expansion maps on the universal localization, for example, $i_{z,w}\colon S_{z,w}^{-1}R(\!( z,w)\!)\to R(\!( z)\!)(\!( w)\!)$.
\end{Remark}

We can also include holomorphic variables which we indicate by an underline. Note that expansion maps preserve products,
 \[
 i_{(x_1,\dots, x_m),\dots, (z_1,\dots, z_n)}(fg)
=i_{(x_1,\dots, x_m),\dots, (z_1,\dots, z_n)}(f)
\cdot i_{(x_1,\dots, x_m),\dots, (z_1,\dots, z_n)}(g).
 \]

\begin{Remark}
The grouping of variables is important, because the algebra in $R(\!( z_1)\!)(\!( z_2)\!)$ and~${R(\!( z_2)\!)(\!( z_1)\!)}$ is different. For example, the elements $i_{z_1,z_2}(z_1\pm z_2)^n$ and $i_{z_2,z_1}(z_1\pm z_2)^n$ are different in $R\big\lb z_1^{\pm1},z_2^{\pm1}\big\rb$ when $n<0$. Expansion maps specify the ring in which the algebraic operations are performed. For holomorphic variables, the grouping is unimportant since
\[
 R\lb\underline z_1\rb \lb\underline z_2\rb = R\lb\underline z_2\rb \lb\underline z_1\rb = R\lb\underline z_1,\underline z_2\rb.
\]
Hence $i_{\underline z_1,\underline z_2}=i_{\underline z_2,\underline z_1}$.
\end{Remark}

Note that $R(\!( z_1,z_2)\!)=R(\!( z_1)\!)(\!( z_2)\!)\cap R(\!( z_2)\!)(\!( z_1)\!)$ is a proper intersection since, for example, $\sum_{n=0}^\infty z_1^{-n}z_2^n\in R(\!( z_1)\!)(\!( z_2)\!)\setminus R(\!( z_1,z_2)\!)$. On the intersection, we have
\begin{equation}\label{swap-expansion}
 i_{z_1,z_2}f=i_{z_2,z_1}f,\qquad\forall f\in R(\!( z_1,z_2)\!).
\end{equation}
In particular, $R\big\lb z_1^{\pm1},z_2^{\pm1}\big\rb$ is an $R(\!( z_1,z_2)\!)$-module and expansions are linear,
\begin{equation}
\label{PowerSeriesLinearity}
 i_{z_1,z_2}(f\cdot g)=f\cdot i_{z_1,z_2}(g),\qquad \forall f\in R(\!( z_1,z_2)\!), \ g\in R\big\lb z_1^{\pm1},z_2^{\pm1}\big\rb.
\end{equation}

\begin{Remark}
 Traditionally, the expansion maps are combined with \eqref{eqn:newton-inverse} and $i_{z_1,z_2}(z_1\pm z_2)^n$ is defined as an explicit series. This places the emphasis on properties of binomial coefficients, which are actually just laws of exponentiation.
\end{Remark}

\subsection{Filtrations}\label{s33}

 To determine which substitutions are well-defined, we will introduce topologies on iterate Laurent rings. These will all be induced by filtrations.

\begin{Definition}
A \emph{filtration} on a ring $A$ is a decreasing sequence of subgroups
\begin{align}\label{filtration}
&\cdots\supset A_{(n)}\supset A_{(n+1)}\supset\cdots,\qquad n\in\mathbb{Z},
\end{align}
such that $1\in A_{(0)}$ and $A_{(n)}\cdot A_{(m)}\subset A_{(n+m)}$. A filtration is said to be \emph{exhaustive} if $\bigcup A_{(n)}=A$ and \emph{separated} if $\bigcap A_{(n)}=\{0\}$. We will always assume that filtrations are separated.
\end{Definition}

 There is a unique Hausdorff topology on a filtered ring for which addition and multiplication are continuous and for which \eqref{filtration} is a fundamental system of neighborhoods of zero. This topology is induced by the uniform structure given by the pseudometric
\[
 d(x,y)=
 \begin{cases}
 0 & {\iff} x-y\in\bigcap_{n\in\mathbb{Z}} A_{(n)},\\
 2^{-n} & {\iff} x-y \in A_{(n)}\setminus A_{(n+1)},\\
 +\infty & {\iff} x-y \notin\bigcup_{n\in\mathbb{Z}} A_{(n)}.
 \end{cases}
\]
A filtered ring is said to be \emph{complete} if it is complete as a uniform space. A \emph{morphism} of filtered rings is a ring homomorphism $f\colon A\to B$ such that $f(A_{(n)})\subset B_{(n)}$ for all $n$. Filtered maps are uniformly continuous.

\begin{Example}
 Let $\mathfrak{a}\triangleleft A$ be an ideal. The \emph{$\mathfrak{a}$-adic filtration} on $A$ is defined by $A_{(n)}=\mathfrak{a}^n$ for~${n>0}$ and $A_{(n)}=A$ for $n\leqslant0$. If $\mathfrak{a}=(0)$, this is the \emph{trivial filtration} on $A$, which induces the discrete topology.

 For example, $A\lb z\rb$ carries the $(z)$-adic filtration where $A\lb z\rb_{(n)}=z^nA\lb z\rb$ for $n\geqslant0$. We also call the filtration on $A(\!( z)\!)$ defined by $A(\!( z)\!)_{(n)}=z^nA\lb z\rb$ for $n\in\mathbb{Z}$ the \emph{$(z)$-adic filtration}, although $(z)$ is not an ideal in $A(\!( z)\!)$.
\end{Example}

\subsection{Substitutions}\label{s34}

The following is the main technical result for defining substitutions. Parts (a)--(c) extend \cite[Chapter~III, Section~2.6]{Bour} to the case where $A_{(0)}\neq A$.

\begin{Proposition}\quad
\begin{itemize}\itemsep=0pt
\item[$(a)$]
 Let $A$ be a filtered ring. Then
 \begin{equation}
 \label{filtration2}
 A(\!( z)\!)_{(n)}=\Bigl\{\sum_{i\in\mathbb{Z}} a_iz^i \,\Big|\, a_i\in A_{(n-i)}\Bigr\},\qquad n\in\mathbb{Z},
 \end{equation}
 defines a filtration on the Laurent ring $A(\!( z)\!)$. If $A$ is separated, then so is $A(\!( z)\!)$ and the power series ring $A\lb z\rb$ is a closed subspace with filtration
 \begin{equation}
 \label{filtration3}
 A\lb z\rb_{(n)}=A(\!( z)\!)_{(n)}\cap A\lb z\rb.
 \end{equation}
\item[$(2)$]
 If $A$ is complete, then so are $A(\!( z)\!)$ and $A\lb z\rb$.
\item[$(3)$]
 If $A_{(0)}=A$, then the polynomials $A[z]$ are dense in $A\lb z\rb$ and the Laurent polynomials $A\big[z^{\pm1}\big]$ are dense in $A(\!( z)\!)$.
\item[$(4)$]
\textup(Universal properties\textup) For each filtered morphism $f\colon A\to B$ into a complete ring $B$ and each invertible element $b\in B_{(1)}$, there is a unique filtered morphism $\bar{f}\colon A(\!( z)\!)\to B$ with~${\bar{f}|_{A}=f}$ and $\bar{f}(z)=b$. Here $A(\!( z)\!)$ is given the filtration generated by~\eqref{filtration2} and the $(z)$-adic filtration.
Pictorially,
 \begin{equation}
 \label{EvalCompletionLocalize}
 \begin{tikzcd}
 A(\!( z)\!)\arrow[r,dashed,"{z\mapsto b}"]& B.\\
 A\uar\arrow[ru,"f"']
 \end{tikzcd}
 \end{equation}
 Similarly for multivariable Laurent rings $A(\!( z_1,\dots, z_n)\!)$ and also for power series rings, where we drop the invertibility condition.
\end{itemize}
\end{Proposition}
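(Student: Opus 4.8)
The plan is to establish the four parts in turn, treating the topology on $A(\!( z)\!)$ as the join of the filtration \eqref{filtration2} with the $(z)$-adic filtration (as the statement of the universal property already flags), since this combination is essential for completeness. For the filtration claim I would check the axioms directly from \eqref{filtration2}: it is a decreasing chain of subgroups because each $A_{(n-i)}$ is a subgroup with $A_{(n+1-i)}\subset A_{(n-i)}$, and the unit lies in $A(\!( z)\!)_{(0)}$ since its only nonzero coefficient is $1\in A_{(0)}$. The one genuine computation is multiplicativity: for $f=\sum a_iz^i$ and $g=\sum b_jz^j$ in levels $n$ and $m$, the coefficient $\sum_{i+j=k}a_ib_j$ of $z^k$ in $fg$ is a finite sum (both supports are bounded below) with summands in $A_{(n-i)}A_{(m-j)}\subset A_{(n+m-k)}$, so the whole coefficient lies in $A_{(n+m-k)}$. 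Separatedness is immediate, since $f\in\bigcap_nA(\!( z)\!)_{(n)}$ forces $a_i\in\bigcap_mA_{(m)}=\{0\}$ for every $i$. To see that $A\lb z\rb$ is closed with the induced filtration \eqref{filtration3} I would open its complement: if $g$ has a nonzero coefficient $a_N$ with $N<0$, separatedness supplies $m$ with $a_N\notin A_{(m)}$, and then $g+A(\!( z)\!)_{(m+N)}$ is a neighborhood of $g$ disjoint from $A\lb z\rb$.

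Completeness is the crux and the main obstacle. Starting from a Cauchy sequence $(f_k)$, the \eqref{filtration2}-part of the topology makes each coefficient sequence $\bigl(a_i^{(k)}\bigr)_k$ Cauchy in $A$, so by completeness of $A$ it converges and determines a candidate limit $f=\sum_ia_iz^i$ coefficientwise. The real difficulty is to show $f$ actually lies in $A(\!( z)\!)$, i.e.\ has only finitely many negative terms, and this is exactly where the $(z)$-adic component is indispensable: $(z)$-adic Cauchyness forces the $f_k$ to agree in all sufficiently negative degrees once $k$ is large, so their negative parts stabilize to a single Laurent polynomial and the support of $f$ is bounded below. (Without the $(z)$-adic filtration this can fail, and $A(\!( z)\!)$ need not be complete under \eqref{filtration2} alone.) Once $f\in A(\!( z)\!)$ is secured, checking $f_k\to f$ in the join topology is routine, and completeness of $A\lb z\rb$ then follows formally, as it is a closed subspace of the complete space $A(\!( z)\!)$.

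The density statement is where the hypothesis $A_{(0)}=A$ enters, forcing $A_{(m)}=A$ for all $m\leqslant0$: given $f=\sum_{i\geqslant N}a_iz^i$ and a target level $n$, the truncation $\sum_{N\leqslant i<n}a_iz^i$ approximates $f$ because the error $\sum_{i\geqslant n}a_iz^i$ has $n-i\leqslant0$, whence $a_i\in A=A_{(n-i)}$, placing it in $A(\!( z)\!)_{(n)}\cap z^nA\lb z\rb$; this gives density of $A\big[z^{\pm1}\big]$, and the same truncation handles $A[z]\subset A\lb z\rb$. For the universal property I would define $\bar f\bigl(\sum_ia_iz^i\bigr)=\sum_if(a_i)b^i$, which converges in the complete ring $B$ since only finitely many $i$ are negative while $b^i\in B_{(i)}$ drives the positive-degree terms arbitrarily deep (here $b\in B_{(1)}$ is invertible, and in fact $b^i\in B_{(i)}$ for all $i\in\mathbb{Z}$ because $b^{-1}\in B_{(-1)}$ by separatedness). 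One then checks that $\bar f$ is a ring homomorphism and is filtered for the join filtration --- the \eqref{filtration2}-part because $f(a_i)b^i\in B_{(n-i)}B_{(i)}\subset B_{(n)}$, and the $(z)$-adic part because $z\mapsto b\in B_{(1)}$ --- and uniqueness follows from continuity of filtered maps together with the density of $A\big[z^{\pm1}\big]$ just established. The multivariable Laurent and the power-series versions are then obtained by iterating this construction one variable at a time.
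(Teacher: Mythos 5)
Your proposal follows the paper's proof part for part: direct verification of the filtration axioms for (a), coefficientwise limits of Cauchy sequences for (2), truncation for (3), and extension from Laurent polynomials for (4). The one place you genuinely depart from the paper is the completeness argument, and there your version is the more careful one. The paper's proof of (2) produces the coefficientwise limit $\sum_i a_iz^i$ and stops; it never checks that this limit has bounded-below support, i.e., lies in $A(\!( z)\!)$ at all, and for the filtration \eqref{filtration2} alone this can fail: for $A=R\lb w\rb$ with the $(w)$-adic filtration, the partial sums of $\sum_{i\leqslant 0}w^{-2i}z^i$ are Cauchy for \eqref{filtration2} (the coefficient $w^{-2i}$ lies in $A_{(-2i)}\subset A_{(N-i)}$ once $i\leqslant -N$) yet admit no limit in $A(\!( z)\!)$. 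You correctly isolate this as the crux and repair it by demanding that Cauchy sequences also be $(z)$-adically Cauchy, which forces the negative parts to stabilize. That is a real improvement over the written argument.

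The cost of importing the $(z)$-adic filtration everywhere surfaces in your part (4), and this is where your route and the paper's genuinely diverge. Your uniqueness step appeals to ``the density of $A\big[z^{\pm1}\big]$ just established'', but part (3) proves density only under the hypothesis $A_{(0)}=A$, which is absent from part (4) and false in the cases that matter (e.g., $A=R(\!( x)\!)$ with the $(x)$-adic filtration, the input for iterated Laurent rings). For the finer ``join'' topology you work with, the unconditional $(z)$-adic density of $A\big[z^{\pm1}\big]$ does not rescue you, because a map filtered for the join need not be $(z)$-adically filtered. The paper threads this needle the other way: its generated filtration has level-$n$ piece containing both $A(\!( z)\!)_{(n)}$ and $z^nA\lb z\rb$, so any filtered morphism is automatically $(z)$-adically filtered and hence determined on the $(z)$-adically dense Laurent polynomials, while existence is obtained by first putting the trivial filtration on $A$. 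You should either adopt that device for uniqueness or explain why density holds for your topology without $A_{(0)}=A$. Separately, your parenthetical ``$b^{-1}\in B_{(-1)}$ by separatedness'' is unjustified for an abstract filtered ring, but also unnecessary: the finitely many negative-degree terms of $\sum_i f(a_i)b^i$ cause no convergence problem wherever they sit in the filtration of $B$.
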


\begin{proof}
 (a)~It is easy to check that \eqref{filtration2} defines a filtration of $A(\!( z)\!)$, which is separated if the original filtration is so. Moreover, $A\lb z\rb$ is a closed subspace.

 (b)~To prove that $A(\!( z)\!)$ is complete, let $f_n(z)=\sum_{i\in\mathbb{Z}} a_{i,n}z^i$, $n\in\mathbb{N}$, be a Cauchy sequence. For each $N$, there is $n_0$ such that for all $n,m\geqslant n_0$ and $i$ we have
 \begin{equation} \label{Cauchy}
 \forall n,m\geqslant n_0\colon \ a_{i,n}-a_{i,m}\in A_{(N-i)}.
 \end{equation}
 In particular, for each fixed $i$ the sequence $(a_{i,n})_{n\in\mathbb{N}}$ is Cauchy in $A$, hence converges to some~${a_i\in A}$. We may then take $m\to\infty$ in \eqref{Cauchy} and get
 \[
 \forall n\geqslant n_0\colon\ a_{i,n}-a_i\in A_{(N-i)}.
 \]
 This proves that $f_n(z)\to \sum_{i\in\mathbb{Z}} a_iz^i$ as $n\to\infty$.

 (c)~If $A_{(0)}=A$, then we have $az^i\in A\lb z\rb_{(i)}$ for any $a\in A$. Hence the truncation $\sum_{i=0}^{N-1} a_iz^i\in A[z]$ is in the $A_{(N)}$-neighborhood of $\sum_{i=0}^\infty a_iz^i$, showing that the polynomials are dense. Similarly for Laurent polynomials.

 (d)~We prove the universal property for $A\lb z\rb$. By the universal property of polynomial rings, $(f,b)$ uniquely induce a ring homomorphism
 \begin{equation}
 \label{poly-map}
 A[z]\longrightarrow B,\qquad z\longmapsto b.
 \end{equation}
 First, put the trivial filtration on $A$. Then \eqref{poly-map} is filtered for the $(z)$-adic topology since ${b\in B_{(1)}}$. Since $B$ is complete, we may extend to a filtered map $\bar{f}$ on the completion $A\lb z\rb$. Observe that $\bar{f}$ is also automatically filtered for \eqref{filtration2}. Hence $\bar{f}$ is filtered for the generated filtration. Conversely, if $\bar{f}$ is filtered for the generated filtration, then it is also filtered for the $(z)$-adic filtration for which $A[z]$ is dense. This proves uniqueness. Similarly for Laurent series.\looseness=-1
\end{proof}

\begin{Remark}
 If $A_{(0)}\neq A$, the polynomials $A[x]$ are not dense in $A\lb x\rb$ and the filtration in the universal property differs from \eqref{filtration3}.
\end{Remark}

Applying \eqref{filtration2} inductively starting with the trivial filtration on $R$, we obtain the \emph{complete filtration} on iterate Laurent rings $R(\!( x_1)\!)\cdots(\!( x_m)\!)$, where to simplify notation we work in the single variable case. For $h(x_1,\dots, x_m)$, to be in positive complete filtration means that each monomial occurring in $h$ has strictly positive total degree. We also have the \emph{generated filtration} on $R(\!( x_1)\!)\cdots(\!( x_m)\!)$, obtained by using the generated filtration inductively at each step.

\begin{Corollary}
Let $f_i(y_1,\dots,y_n)\in R(\!( y_1)\!)\cdots(\!( y_n)\!)$, $1\leqslant i\leqslant m$, be invertible and in positive complete filtration. There is a unique \emph{substitution morphism}{\samepage
\[
 |_{x_i^{\pm1}\to f_i(y_1,\dots,y_n)^{\pm1}}\colon\ R(\!( x_1)\!)\cdots(\!( x_m)\!)\longrightarrow R(\!( y_1)\!)\cdots(\!( y_n)\!), x_i\longmapsto f_i(y_1,\dots,y_n),
\]
taking the generated filtration to the complete filtration.}

Moreover, given $g_j(z_1,\dots, z_p)\in R(\!( z_1)\!)\cdots(\!( z_p)\!)$, $1\leqslant j\leqslant n$, invertible and in positive complete filtration we have \emph{functoriality}
\[
 |_{y_j^{\pm1}\to g_i(z_1,\dots,y_p)^{\pm1}} \circ |_{x_i^{\pm1}\to f_i(y_1,\dots,y_n)^{\pm1}}=|_{x_i^{\pm1}\to h_i(z_1,\dots, z_p)^{\pm1}},
\]
where $h_i(z_1,\dots, z_p)=f_i(y_1,\dots,y_n)|_{y_j^{\pm1}\to g_i(z_1,\dots,y_p)^{\pm1}}$.
\end{Corollary}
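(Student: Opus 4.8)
The plan is to deduce both assertions from the universal property in part~(d) of the previous Proposition, applied one variable at a time, together with the single structural observation that at every stage the complete filtration is contained in the generated filtration.

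I would first construct the substitution morphism by induction on $m$. The target $B=R(\!( y_1)\!)\cdots(\!( y_n)\!)$, equipped with its complete filtration, is complete as a uniform space by part~(b) (starting from $R$ with its trivial, discrete filtration and applying the completeness statement at each of the $n$ steps). The hypothesis that $f_k$ is invertible and in positive complete filtration says precisely that $f_k$ is an invertible element of $B_{(1)}$. Setting $A_0=R$ and $A_k=A_{k-1}(\!( x_k)\!)$, I would begin with the structure map $A_0=R\to B$ (filtered, since constants have total degree $0$) and, at the $k$-th step, invoke part~(d) with the filtered morphism $\phi_{k-1}\colon A_{k-1}\to B$ and the invertible element $f_k\in B_{(1)}$ to obtain a unique filtered extension $\phi_k\colon A_{k-1}(\!( x_k)\!)\to B$ with $\phi_k(x_k)=f_k$. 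The filtration that part~(d) places on $A_k$ is the one generated by \eqref{filtration2} and the $(x_k)$-adic filtration, which is by definition the generated filtration on $R(\!( x_1)\!)\cdots(\!( x_k)\!)$; hence after $m$ steps $\phi_m$ is the desired substitution morphism, filtered from the generated to the complete filtration. Uniqueness propagates stepwise from the uniqueness clause of part~(d): any filtered ring homomorphism sending $x_i\mapsto f_i$ restricts at each stage to a filtered extension of the previous one and therefore coincides with $\phi_m$.

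The point on which everything else rests, and which I would isolate first, is the inclusion $A^{\mathrm{comp}}_{(n)}\subseteq A^{\mathrm{gen}}_{(n)}$ at every stage. This follows by induction on the number of variables: since the generated filtration is built from \eqref{filtration2} together with the $(z)$-adic filtration, its pieces are \emph{sums} and hence larger; and if the inclusion holds on $A$, then applying \eqref{filtration2} preserves it on $A(\!( z)\!)$, after which adjoining the $(z)$-adic part only enlarges the pieces further. With this in hand, functoriality becomes a uniqueness argument. Writing $\Phi={|}_{x_i\to f_i}$ and $\Psi={|}_{y_j\to g_j}$, the map $\Phi$ sends the generated $x$-filtration into the complete $y$-filtration, which by the inclusion is contained in the generated $y$-filtration, while $\Psi$ sends the generated $y$-filtration into the complete $z$-filtration; composing, $\Psi\circ\Phi$ sends the generated $x$-filtration into the complete $z$-filtration and is thus again a filtered morphism of the required type. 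The same chain of inclusions shows that each $h_i=\Psi(f_i)$ lies in the positive complete $z$-filtration, and $h_i$ is invertible because ring homomorphisms preserve units, so $|_{x_i\to h_i}$ is defined. Since $\Psi\circ\Phi$ is a filtered ring homomorphism from the generated to the complete filtration restricting to the structure map on $R$ and sending $x_i\mapsto\Psi(f_i)=h_i$, the uniqueness established above forces $\Psi\circ\Phi={|}_{x_i\to h_i}$.

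I expect the main obstacle to be exactly the bookkeeping between the two filtrations. The substitution morphisms are only asserted to be continuous from the generated to the complete filtration, so a priori the output filtration of $\Phi$ (complete) does not match the input filtration demanded by $\Psi$ (generated); the entire functoriality statement hinges on the inclusion $\mathrm{complete}\subseteq\mathrm{generated}$ reconciling the two. I would therefore take care to prove that inclusion cleanly, and to verify at each inductive step that the hypotheses of part~(d) genuinely apply — namely that the domain filtration produced by the previous step is the generated one, that the target remains complete, and that $f_k\in B_{(1)}$ is invertible — before assembling the composite.
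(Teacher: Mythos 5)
Your proposal is correct and follows exactly the route the paper intends: the Corollary is stated without proof as an iterated application of the universal property in part~(d) of the preceding Proposition, which is precisely your stepwise construction. Your explicit isolation of the inclusion of the complete filtration inside the generated filtration is the one point the paper leaves tacit, and it is indeed the fact that makes the composite in the functoriality statement a filtered morphism of the required type.
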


For example, there are mutually inverse bijections
\begin{gather}
 R(\!( v)\!)(\!( w)\!)\longrightarrow R(\!( z)\!)(\!( w)\!),\qquad v\longmapsto i_{z,w}F(z,w), \label{substitution1}\\
 R(\!( z)\!)(\!( w)\!)\longrightarrow R(\!( v)\!)(\!( w)\!),\qquad z\longmapsto i_{v,w}F(v,\iota(w)).\label{substitution3}
\end{gather}
Moreover, the universal property \eqref{EvalCompletionLocalize} yields a unique filtered morphism
 \begin{equation}
 \label{substitution2}
 R(\!( v)\!)\longrightarrow R(\!( z)\!)(\!( w)\!),\qquad v\longmapsto i_{z,w}F(z,w).
 \end{equation}

\begin{Remark}
 For vertex algebras, one may restrict to substitutions in Laurent polynomials. As a formal group law may involve infinitely many powers, this is no longer possible for vertex $F$-algebras.
\end{Remark}

\subsection[F-binomial coefficient]{$\boldsymbol{F}$-binomial coefficients}\label{s35}

Let $F(z,w)$ be a formal group law. As observed in Example~\ref{Ex_Main_Example_Expansions}, for all $n\in\mathbb{Z}$ there are well-defined integer powers
\[
 i_{z,w}F(z,w)^n,\qquad i_{z,w}F(\iota z,w)^n,\qquad i_{z,w}F(z,\iota w)^n,\qquad i_{z,w}(\iota F(z,w))^n.
\]

\begin{Definition}
The coefficients of the expansion $i_{z,w}F(z,w)^n$ are called the \emph{F-binomial coefficients}, so by definition
\begin{equation}
\label{def-F-binom}
 i_{z,w}F(z,w)^n=\sum_{i,j\in\mathbb{Z}}\Fbinom{n}{i,j} z^iw^j,\qquad\forall n\in\mathbb{Z}.
\end{equation}
\end{Definition}

\begin{Proposition}
The $F$-binomial coefficients satisfy the following identities:
\begin{gather*}
 \Fbinom{n}{i,j}=0
 \qquad\text{if} \ j<0 \ \text{or} \ i+j<n,\\
 \Fbinom{n}{i,0}=\delta_i^m,\qquad
\Fbinom{n}{i,j}=\Fbinom{n}{j,i}
\qquad\text{if}\ n\geqslant0,\\
 \Fbinom{m+n}{r,s}=\sum_{\substack{i+k=r\\j+\ell=s}} \Fbinom{m}{i,j}\Fbinom{n}{k,\ell}.
\end{gather*}
\end{Proposition}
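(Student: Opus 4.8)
The plan is to prove all the identities by working inside the iterated Laurent ring $R(\!( z)\!)(\!( w)\!)$, where by Example~\ref{Ex_Main_Example_Expansions} the powers $i_{z,w}F(z,w)^n$ are well-defined elements for every $n\in\mathbb{Z}$, and to extract each identity from a structural feature of these powers. The key preliminary step is the factorization $F(z,w)=z\cdot\bigl(1+z^{-1}wg(z,w)\bigr)$, where $g\in R\lb z,w\rb$ with $g(0,0)=1$ arises from writing $F(z,w)=z+wg(z,w)$ using \eqref{s1eqn1}. Since $z$ is a unit in $R(\!( z)\!)$ and $z^{-1}wg$ has positive $(w)$-adic filtration, viewing $F(z,w)$ as a Laurent series in $w$ over the base ring $R(\!( z)\!)$ and applying the Newton expansion \eqref{eqn:newton-inverse} gives $i_{z,w}F(z,w)^n=z^n\sum_{k\geqslant0}\binom{n}{k}\bigl(z^{-1}wg\bigr)^k$, which lies in $z^nR(\!( z)\!)\lb w\rb$. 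In particular only non-negative powers of $w$ occur, which is exactly $\Fbinom{n}{i,j}=0$ for $j<0$.

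For the vanishing when $i+j<n$ I would argue with the complete (total-degree) filtration. In the factorization above, each monomial of $z^{-1}wg(z,w)$ has total degree $(i-1)+(j+1)\geqslant0$, so every term of $\bigl(z^{-1}wg\bigr)^k$ has total degree $\geqslant0$; multiplying by $z^n$ shows that $i_{z,w}F(z,w)^n$ has total degree $\geqslant n$ for all $n\in\mathbb{Z}$. This is precisely $\Fbinom{n}{i,j}=0$ whenever $i+j<n$. (One could instead invoke that the substitution morphism of the preceding Corollary carries the generated filtration to the complete filtration, but the explicit Newton expansion handles the negative powers $n<0$ most transparently.)

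The two pointwise identities are now immediate. For $\Fbinom{n}{i,0}$ I would take the $w^0$-component of the Newton expansion: only the $k=0$ term contributes (each $\bigl(z^{-1}wg\bigr)^k$ with $k\geqslant1$ lies in $w^kR(\!( z)\!)\lb w\rb$), so the $w^0$-part equals $z^n=F(z,0)^n$, giving $\Fbinom{n}{i,0}=\delta_i^n$. For the symmetry $\Fbinom{n}{i,j}=\Fbinom{n}{j,i}$ with $n\geqslant0$, the point is that a non-negative power $F(z,w)^n$ is a genuine element of $R\lb z,w\rb$, on which the expansion $i_{z,w}$ acts trivially; hence $\Fbinom{n}{i,j}$ is simply the honest power-series coefficient, and commutativity $F(z,w)=F(w,z)$ from \eqref{s1eqn1} forces it to be symmetric in $i,j$. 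It is worth stressing why this is restricted to $n\geqslant0$: for $n<0$ the expansion $i_{z,w}$ breaks the $z\leftrightarrow w$ symmetry, since $i_{z,w}$ and $i_{w,z}$ disagree on negative powers, so no symmetry is to be expected.

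Finally, the convolution identity expresses that forming $F$-binomial coefficients is multiplicative. Because the powers are already elements of the ring $R(\!( z)\!)(\!( w)\!)$ obeying the exponential law $F(z,w)^{m+n}=F(z,w)^m\cdot F(z,w)^n$ (Proposition~\ref{Laurent-invbar}), comparing the coefficient of $z^rw^s$ on the two sides -- the right-hand side being the Cauchy product of the two coefficient series -- yields the stated formula at once. The one point that needs genuine care, and where I expect the main bookkeeping obstacle, is that this Cauchy product must converge, i.e.\ each coefficient sum must be finite. This is automatic because multiplication takes place inside the ring $R(\!( z)\!)(\!( w)\!)$, but it can also be verified directly from the support conditions already established: with $j,\ell\geqslant0$ and $j+\ell=s$ fixed there are finitely many pairs $(j,\ell)$, and then the constraints $i+j\geqslant m$ and $k+\ell\geqslant n$ together with $i+k=r$ confine $i$ to a finite range, so only finitely many terms are nonzero.
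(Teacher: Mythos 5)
Your proposal is correct. Note that the paper states this Proposition without any proof, so there is no argument of the author's to compare against; your write-up supplies one, and it uses exactly the toolkit the paper has already set up: the Newton-type expansion \eqref{eqn:newton-inverse} applied to the factorization $F(z,w)=z\bigl(1+z^{-1}wg(z,w)\bigr)$ in $R(\!(z)\!)\lb w\rb$, the observation that $i_{z,w}$ is the identity on honest power series (which together with $F(z,w)=F(w,z)$ gives the symmetry for $n\geqslant0$, and correctly fails for $n<0$), and the exponential law $f^m\cdot f^n=f^{m+n}$ from Proposition~\ref{Laurent-invbar} for the convolution identity. Your explicit check that the convolution sum is finite (using $j,\ell\geqslant0$, $j+\ell=s$, and the support constraints $i+j\geqslant m$, $k+\ell\geqslant n$ with $i+k=r$) is exactly the point that deserves care, and it is handled correctly. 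One small remark: the displayed identity $\Fbinom{n}{i,0}=\delta_i^m$ in the statement contains a typo ($m$ should be $n$), and your proof implicitly establishes the corrected version $\Fbinom{n}{i,0}=\delta_i^n$.
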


\begin{Example}
For the one-parameter group law \eqref{OneParameterFGL}, we have
\[
 \binom{n}{i,j}_{\!\!F_s}=\binom{n}{j}\binom{j}{i+j-n}s^{i+j-n}.
\]
In particular, for the additive group law ($s=0$), we recover the ordinary binomial coefficients $\binom{n}{i,j}_{\!\!F_a}=\binom{n}{j}$ if $i+j=n$ and $\binom{n}{i,j}_{\!\!F_a}=0$ else. For the multiplicative group law, put $s=1$.
\end{Example}

\subsection[F-delta distributions]{$\boldsymbol{F}$-delta distributions}
\label{s36}


\begin{Definition}
 The group law \emph{$F$-delta distribution} is
 \[
 z^{-1}\delta_F\left(\frac{w}{z}\right) = i_{z,\underline w}F(z,\iota w)^{-1}-i_{w,\underline z}F(z,\iota w)^{-1}.
 \]
\end{Definition}

\begin{Example}
For the additive group law, we get the classical distribution
\begin{equation}
\label{ClassicalDelta}
 z^{-1}\delta_{F_a}\left(\frac{w}{z}\right)=i_{z,\underline w}(z-w)^{-1}-i_{w,\underline z}(z-w)^{-1}=\sum_{n\in\mathbb{Z}} w^nz^{-n-1}.
\end{equation}
More generally, for the one-parameter formal group law \eqref{OneParameterFGL}, we have
\[
 \delta_{F_s}\left(\frac{w}{z}\right)=(1+sw)\sum_{n\in\mathbb{Z}} w^nz^{-n}.
\]
\end{Example}

\begin{Proposition}\quad
\label{PropJacobiDelta}
\begin{itemize}\itemsep=0pt
\item[$(a)$]
 The F-delta distribution is supported on the diagonal,
 \begin{equation}
 \label{DiagonalSupport}
 \delta_F\left(\frac{w}{z}\right)f(z)=\delta_F\left(\frac{w}{z}\right)f(w),\qquad\forall f\in R(\!( z)\!).
 \end{equation}
 More generally, if $f(z,w)\in R(\!( z,w)\!)$ converges on the diagonal,
 \begin{equation}
 \label{DiagonalSupport2}
 \delta_F\left(\frac{w}{z}\right)f(z,w)=\delta_F\left(\frac{w}{z}\right)f(w,w).
 \end{equation}
 \item[$(2)$]
 We have the \emph{$F$-Jacobi identity},
 \begin{gather*}
 i_{z_1,\underline z_2}z_0^{-1}\delta_F\left(\frac{F(z_1,\iota z_2)}{z_0}\right)
 -i_{z_2,\underline z_1}z_0^{-1}\delta_F\left(\frac{F(z_1,\iota z_2)}{z_0}\right)
 =i_{z_1,\underline z_0}z_2^{-1}\delta_F\left(\frac{F(z_1,\iota z_0)}{z_2}\right).
 \end{gather*}
 \item[$(3)$]
 We have
 \[
 i_{z_1,\underline z_0}z_2^{-1}\delta_F\left(\frac{F(z_1,\iota z_0)}{z_2}\right)
 =i_{z_2,\underline z_0}z_1^{-1}\delta_F\left(\frac{F(z_0,z_2)}{z_1}\right).
 \]
 \end{itemize}
\end{Proposition}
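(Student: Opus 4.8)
The plan is to dispose of part (a) by a direct reduction to the classical delta distribution, and then to deduce the Jacobi identity (b) and the symmetry (c) from their additive counterparts using Lazard's theorem and the logarithm.

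For (a), I would begin from the factorization $F(z,\iota w)=G(z,w)\cdot(z-w)$ of the Proposition, in which $G$ is a unit with $G(z,z)=\phi'(z)$. Since $G(z,w)^{-1}\in R\lb z,w\rb$ is holomorphic it is fixed by every expansion map, so multiplicativity of expansions gives
\[
 z^{-1}\delta_F\left(\frac{w}{z}\right)=G(z,w)^{-1}\bigl(i_{z,\underline w}(z-w)^{-1}-i_{w,\underline z}(z-w)^{-1}\bigr)=G(z,w)^{-1}\,z^{-1}\delta_{F_a}\left(\frac{w}{z}\right).
\]
By \eqref{ClassicalDelta} the classical distribution equals $\sum_{n}w^{n}z^{-n-1}$, and an elementary reindexing shows $g(z,w)\,z^{-1}\delta_{F_a}(w/z)=g(w,w)\,z^{-1}\delta_{F_a}(w/z)$ for every $g$ converging on the diagonal. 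Two applications of this relation then reduce both $z^{-1}\delta_F(w/z)f(z,w)$ and $z^{-1}\delta_F(w/z)f(w,w)$ to $G(w,w)^{-1}f(w,w)\,z^{-1}\delta_{F_a}(w/z)$, proving \eqref{DiagonalSupport2}; \eqref{DiagonalSupport} is the special case where $f$ is independent of $w$.

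For (b) and (c), every formal group law is pulled back from the universal one over the torsion-free Lazard ring $L$, so it suffices to prove the identities over $L\otimes\mathbb{Q}$, where $F(z,w)=\phi^{-1}(\phi(z)+\phi(w))$. I would introduce the variable-wise substitution $\Phi\colon z_i\mapsto\phi^{-1}(t_i)$; since $\phi^{-1}(t)=t+O(t^2)$ is invertible and in positive complete filtration, the substitution Corollary makes $\Phi$ an isomorphism of iterated Laurent rings with inverse $t_i\mapsto\phi(z_i)$, and being holomorphic in each single variable it preserves the variable orderings and the holomorphic/meromorphic labels, hence intertwines $i_{z_1,\underline z_2}$ with $i_{t_1,\underline t_2}$ and so on. Under $\Phi$ one computes $F(z_i,\iota z_j)\mapsto\phi^{-1}(t_i-t_j)$, and writing $\psi(x)=\phi^{-1}(x)/x=1+O(x)$ every $F$-delta is carried to the corresponding additive delta $\delta_{F_a}$ times the holomorphic unit $\psi(\,\cdot\,)^{-1}$, whose argument is, up to sign, the linear form cutting out that delta's support. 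Invoking the diagonal support of part (a), each such unit may be replaced by its value $\psi(0)^{-1}=1$ on the support, so that (b) and (c) collapse to the purely additive Jacobi identity and symmetry for $\delta_{F_a}$, which I would verify by direct manipulation of $\sum_n w^n z^{-n-1}$ (or cite from \cite{LL}).

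The main obstacle will be the bookkeeping of the nested expansions. Each delta already unpacks into two expansions through its definition (for instance $i_{z_0,\underline u}$ and $i_{u,\underline{z_0}}$ with $u=F(z_1,\iota z_2)$), and these must be composed with the outer expansion and then matched across $\Phi$; in particular one must check that expanding in $u$ ``large'' agrees with expanding in $t_1-t_2$ ``large'', since the two differ only by the holomorphic unit $\psi$. Confirming that all the substitutions satisfy the Corollary's hypotheses---invertibility and positive complete filtration in the correct iterated ring---and that the unit factors are genuinely annihilated by the diagonal support, is the technically demanding heart of the argument.
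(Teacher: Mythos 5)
Your part (a) is exactly the paper's argument: both use the factorization \eqref{differByUnit} to write $\delta_F(w/z)=G(z,w)^{-1}\delta_{F_a}(w/z)$ and reduce to the classical case. For parts (b) and (c), however, you take a genuinely different route. The paper gives an \emph{elementary} proof valid over any coefficient ring: using functoriality of substitution, each of the three delta terms in the Jacobi identity is unpacked into a difference of two expansions $i_{\,\cdot\,}F(z_0,\iota z_1,z_2)^{-1}$ with various orderings of the variables, and one observes via \eqref{swap-expansion} that the six resulting terms cancel in pairs; part (c) is then handled by multiplying by the holomorphic unit $F(\iota z_0,z_1,\iota z_2)/F(z_0,\iota z_1,z_2)$ and evaluating it to $1$ on the support. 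Your route---reduce to the universal group law over the torsion-free Lazard ring, pass to $L\otimes\mathbb{Q}$, and transport everything to the additive law via $z_i\mapsto\phi^{-1}(t_i)$---is one the author explicitly acknowledges as viable (see the remark following the proof, on deducing the $F$-Jacobi identity from \eqref{deFdeAdditive2}), but deliberately avoids in favor of the Lazard-free argument. Your approach buys a conceptual reduction to the well-known additive identities at the cost of the substitution bookkeeping you rightly flag, plus a dependence on Lazard's theorem that the paper's proof of (b) and (c) does not have. Two points deserve more care than your sketch gives them: in (b) the unit $\psi(t_0-t_1+t_2)^{-1}$ is actually \emph{common} to all six terms and factors out, so no support argument is needed there; but in (c) the two sides acquire the \emph{different} units $\psi(t_0-t_1+t_2)^{-1}$ and $\psi(-(t_0-t_1+t_2))^{-1}$ (and the linear forms differ by a sign), so the diagonal-support step is genuinely load-bearing and requires extending \eqref{DiagonalSupport2}, stated for two variables, to the three-variable deltas obtained by substitution---a move the paper also makes, but which you should state and justify explicitly.
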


\begin{proof}
(a)~The identities \eqref{DiagonalSupport} and \eqref{DiagonalSupport2} are well-known for the classical delta distribution. By \eqref{differByUnit} and \eqref{PowerSeriesLinearity}, the delta distributions are related by
\begin{equation}
\label{deFdeAdditive}
 \delta_F\left(\frac{w}{z}\right)=G(z,w)^{-1}\delta_{F_a}\left(\frac{w}{z}\right).
\end{equation}
Therefore, \eqref{DiagonalSupport} and \eqref{DiagonalSupport2} follow directly from the classical case.

(b)~Formally, the first term of the Jacobi identity is the substitution
\[
 i_{z_1,\underline z_2}z_0^{-1}\delta_F\left(\frac{F(z_1,\iota z_2)}{z_0}\right)
 =z_0^{-1}\delta_F\left(\frac{w}{z_0}\right)\Big|_{w^{\pm1}\to i_{z_1,\underline z_2}F(z_1,\iota \underline z_2)^{\pm1}}.
\]
Expanding the definition of $\delta_F$ and using functoriality of substitution, this gives
 \begin{align*}
 i_{z_1,\underline z_2}z_0^{-1}\delta_F\left(\frac{F(z_1,\iota z_2)}{z_0}\right)={}&
i_{z_0,\underline w}F(z_0,\iota\underline w)^{-1}\big|_{\underline w\to i_{\underline z_1, \underline z_2} F(\underline z_1,\iota \underline z_2)}\\
&-i_{w,\underline z_0}F(\underline z_0,\iota w)^{-1}\big|_{w^{\pm1}\to i_{z_1, \underline z_2} F(z_1,\iota \underline z_2)^{\pm1}}\\
={}&i_{z_0,(\underline z_1, \underline z_2)} F(z_0,\iota z_1, z_2)^{-1}-i_{z_1,(\underline z_2, \underline z_0)} F(z_0,\iota z_1, z_2)^{-1}.
 \end{align*}
In the same way,
\begin{align*}
 i_{z_2,\underline z_1}z_0^{-1}\delta_F\left(\frac{F(z_1,\iota z_2)}{z_0}\right)
 &=i_{z_0,(\underline z_2,\underline z_1)}F(z_0,\iota z_1, z_2)^{-1}-i_{z_2,(\underline z_1,\underline z_0)}F(z_0,\iota z_1, z_2)^{-1},\\
 i_{z_1,\underline z_0}z_2^{-1}\delta_F\left(\frac{F(z_1,\iota z_0)}{z_2}\right)
 &=i_{z_2,(\underline z_1, \underline z_0)}F(z_0,\iota z_1, z_2)^{-1}-i_{z_1,(\underline z_0, \underline z_2)}F(z_0,\iota z_1, z_2)^{-1}.
\end{align*}
Observe that by \eqref{swap-expansion} each term appears twice in the last three equations. Hence the terms cancel in pairs, which proves the $F$-Jacobi identity.

(c)~Notice that
\[
 i_{z_2,\underline z_0}z_1^{-1}\delta_F\left(\frac{F(z_0,z_2)}{z_1}\right)
=i_{z_1,(\underline z_0, \underline z_2)}F(z_1,\iota z_0,\iota z_2)^{-1} - i_{z_2,(\underline z_0, \underline z_1)}F(z_1,\iota z_0, \iota z_2)^{-1}
\]
can be multiplied by the holomorphic $\frac{F(\iota z_0,z_1,\iota z_2)}{F(z_0, \iota z_1, z_2)}$ which may be exchanged with the expansion. Hence
\begin{gather*}
 i_{z_2,\underline z_0}z_1^{-1}\delta_F\left(\frac{F(z_0,z_2)}{z_1}\right)\cdot\frac{F(\iota z_0,z_1,\iota z_2)}{F(z_0, \iota z_1, z_2)}\\
\qquad=i_{z_1,(\underline z_0, \underline z_2)}F(z_0,\iota z_1, z_2)^{-1} - i_{z_2,(\underline z_0, \underline z_1)}F(z_0,\iota z_1, z_2)^{-1}\\
\qquad\overset{\eqref{swap-expansion}}{=}-i_{z_1,\underline z_0}z_2^{-1}\delta_F\left(\frac{F(z_1,\iota z_0)}{z_2}\right).
\end{gather*}
Finally, using \eqref{DiagonalSupport} put $z_1=F(z_0,z_2)$ into $\frac{F(\iota z_0,z_1,\iota z_2)}{F(z_0, \iota z_1, z_2)}$ on the left to get $1$.
\end{proof}

By \eqref{DiagonalSupport2}, we may put $z=w$ in \eqref{deFdeAdditive} and use \eqref{Gdiagonal} to get
\begin{equation}
\label{deFdeAdditive2}
 \delta_F\left(\frac{w}{z}\right)\phi'(z)=\delta_{F_a}\left(\frac{w}{z}\right).
\end{equation}
Observe here that $\phi'(z)=F^{0,1}(z,0)^{-1}$ makes sense and \eqref{deFdeAdditive2} holds over an arbitrary ring $R$. There is a proof of the $F$-Jacobi identity based on \eqref{deFdeAdditive2} and the Jacobi identity for the additive group law, but we remind the reader that the proof of \eqref{Gdiagonal} used Lazard's theorem. We have given an elementary proof.

\subsection[F-residues]{$\boldsymbol{F}$-residues}
\label{s37}

Recall that the \emph{residue} of a formal Laurent series $f(z)=\sum a_nz^n\in R\big\lb z^{\pm1}\big\rb$ is defined as $\Res_{z=0} f(z){\rm d}z=a_{-1}$. We have the identities (see \cite[Chapter~13]{Rem})
\begin{gather}
\Res_{z=0} f^{(n)}(z){\rm d}z=0, \forall n\geqslant1,\qquad
\Res_{z=0} f'(z)g(z){\rm d}z=-\Res_{z=0} f(z)g'(z){\rm d}z,\nonumber\\
\Res_{w=0} f(h(w))h'(w){\rm d}w=\Res_{z=0} f(z){\rm d}z.\label{s2eqn3}
\end{gather}
Here $f,g\in R\big\lb z^{\pm1}\big\rb$ are formal Laurent series and $h$ is a (holomorphic) formal power series with~${h(0)=0}$ and $h'(0)\in R^\times$.\medskip

This terminology can be extended to formal group laws $F(z,w)$ as follows. Recall the invariant $1$-form $\theta_F$ from \eqref{Invariant_One_Form}. Then the \emph{F-residue} is defined as
$
 \Res_{z=0}^F f(z){\rm d}z=\Res_{z=0}f(z)\theta_F$.
Assuming the existence of a logarithm,
\begin{align}
\label{s2eqn5}
 \Res_{z=0}^F f(z){\rm d}z=\Res_{x=0} f(x){\rm d}x,
 \qquad\text{where}\quad f(x)=f\bigl(\phi^{-1}(x)\bigr).
\end{align}
From \eqref{Invariant_Form_Inverse}, we have
\begin{equation}
\label{residue-inversion}
 \Res_{z=0}^F f(\iota(z)){\rm d}z=-\Res_{z=0}^F f(z){\rm d}z.
\end{equation}
From \eqref{deFdeAdditive2} combined with \eqref{ClassicalDelta} and \eqref{DiagonalSupport}, we find
\[
 \Res_{z=0}^Fz^{-1}\delta_F\left(\frac{w}{z}\right){\rm d}z=\Res_{w=0}^Fz^{-1}\delta_F\left(\frac{w}{z}\right){\rm d}w=1.
\]

\subsection[F-hyperderivatives]{$\boldsymbol{F}$-hyperderivatives}
\label{s38}

Generalizing Hasse's number-theoretic notion to formal group laws, define the \emph{F-hyperderivative} of $f\in R(\!( z)\!)$, $\mathcal{S}^F_n f$, $n\geqslant0$, by expanding the substitution
\begin{equation}
\label{def-F-hyper}
 i_{z,\underline{w}}f(F(z,w))=\sum_{n\geqslant0}\bigl(\mathcal{S}^F_n f\bigr)(z)w^n.
\end{equation}
Hence $\mathcal{S}^F_nf(0)$ picks out the $n$-th coefficient of $f$, and we think of $\mathcal{S}^F_n f$ as a substitute for $(n!)^{-1}f^{(n)}(z)$. One easily shows
\begin{gather}
 \bigl(\mathcal{S}_0^F f\bigr)(z)=f(z),\qquad
 \bigl(\mathcal{S}_1^F f\bigr)(z)\theta_F=f'(z){\rm d}z,\qquad
 \mathcal{S}_n^F(\lambda f+g)=\lambda\mathcal{S}_n(f)+\mathcal{S}_n(g),\nonumber\\
 \mathcal{S}_n^F(f\cdot g)=\sum_{i+j=n} \mathcal{S}_i^F(f)\cdot \mathcal{S}_j^F(g),\label{s2eqn7}\\
 \mathcal{S}_m^F\mathcal{S}_n^F(f)=\mathcal{S}_n^F\mathcal{S}_m^F(f)=\sum_{k\geqslant0}\Fbinom{k}{m,n}\mathcal{S}_k^F(f).\label{s2eqn8}
\end{gather}
For the additive group law, \eqref{s2eqn8} implies $\bigl(\mathcal{S}_1^{F_a}\bigr)^{\circ n}=n!\cdot\mathcal{S}^{F_a}_n$, which shows the importance of considering higher hyperderivatives in the presence of torsion. Equation \eqref{s2eqn8} follows from the associativity of $F$.

Comparing \eqref{def-F-binom} and \eqref{def-F-hyper}, we find
\[
 \mathcal{S}_j^F(z^m)=\sum_{i\in\mathbb{Z}}\Fbinom{m}{i,j}z^i.
\]
From this, we see that \eqref{s2eqn8} implies
\[
 \sum_{i\in\mathbb{Z}}\Fbinom{m}{i,s}\Fbinom{i}{j,r}=\sum_{i\in\mathbb{Z}}\Fbinom{m}{i,r}\Fbinom{i}{j,s}=\sum_{k\geqslant0}\Fbinom{k}{r,s}\Fbinom{m}{j,i}
\]
for all $r,s,m,j\in\mathbb{Z}$.

\begin{Theorem}
\label{s2thm1}
For all $n\geqslant1$, we have
\begin{gather}
 \Res^F_{z=0}\mathcal{S}_n^Ff(z){\rm d}z=0,\label{s2eqn9}\\
 \Res^F_{z=0}\mathcal{S}_n^F f(z)\cdot g(z){\rm d}z=-\sum_{j=1}^n \Res_{z=0}^F\bigl[\mathcal{S}_{n-j}^Ff(z)\cdot \mathcal{S}_j^F g(z)\bigr]{\rm d}z.\label{s2eqn10}
\end{gather}
\end{Theorem}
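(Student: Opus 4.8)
The plan is to deduce \eqref{s2eqn10} from \eqref{s2eqn9}, and then to prove \eqref{s2eqn9} by changing to the additive coordinate with the logarithm. The first reduction is immediate: applying $\Res_{z=0}^F(\,\cdot\,)\,{\rm d}z$ to the Leibniz rule \eqref{s2eqn7}, namely $\mathcal{S}_n^F(fg)=\sum_{i+j=n}\mathcal{S}_i^F(f)\,\mathcal{S}_j^F(g)$, the left-hand side vanishes by \eqref{s2eqn9} applied to $fg\in R(\!( z)\!)$ for $n\geq1$. Splitting off the $j=0$ term, in which $\mathcal{S}_0^Fg=g$, rearranges the surviving terms into exactly \eqref{s2eqn10}. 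So everything reduces to \eqref{s2eqn9}.

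For \eqref{s2eqn9} the case $n=1$ is immediate, since $\mathcal{S}_1^Ff\cdot\theta_F=f'\,{\rm d}z$ gives $\Res_{z=0}^F\mathcal{S}_1^Ff\,{\rm d}z=\Res_{z=0}f'\,{\rm d}z=0$. For $n\geq2$ the naive idea of expressing $\mathcal{S}_n^F$ through iterates of $\mathcal{S}_1^F$ via \eqref{s2eqn8} only yields relations of the form $n\cdot\Res_{z=0}^F\mathcal{S}_n^Ff\,{\rm d}z=(\text{lower-order residues})$, which one cannot invert over a ring with torsion---this is exactly the difficulty that higher hyperderivatives exist to address, and I expect it to be the main obstacle. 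To get around it I would invoke Lazard's theorem: writing $f=\sum_ic_iz^i$, the residue $\Res_{z=0}^F\mathcal{S}_n^Ff\,{\rm d}z$ is, coefficient by coefficient in the $c_i$, an element of the Lazard ring $L$ built from $F$-binomial coefficients and the coefficients of $p_F$. As $L$ is torsion-free, it suffices to establish \eqref{s2eqn9} over $\mathbb{Q}$-algebras, where the logarithm $\phi$ from \eqref{s1eqn8} exists.

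Over a $\mathbb{Q}$-algebra I would set $\tilde f=f\circ\phi^{-1}$ and substitute $z=\phi^{-1}(x)$ into the defining relation \eqref{def-F-hyper}. Since $F(\phi^{-1}(x),w)=\phi^{-1}(x+\phi(w))$, this turns the generating function into
\[
 \sum_{n\geq0}\bigl(\mathcal{S}_n^Ff\bigr)\bigl(\phi^{-1}(x)\bigr)\,w^n=\tilde f\bigl(x+\phi(w)\bigr).
\]
Now I would apply $\Res_{x=0}(\,\cdot\,)\,{\rm d}x$, which by \eqref{s2eqn5} computes the $F$-residue in $z$. Taylor-expanding $\tilde f(x+\phi(w))=\sum_{m\geq0}\frac1{m!}\tilde f^{(m)}(x)\,\phi(w)^m$ and using $\Res_{x=0}\tilde f^{(m)}(x)\,{\rm d}x=0$ for $m\geq1$ (a formal derivative has no residue), every $m\geq1$ term disappears, so $\Res_{x=0}\tilde f(x+\phi(w))\,{\rm d}x=\Res_{x=0}\tilde f(x)\,{\rm d}x$ is independent of $w$. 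Comparing coefficients of $w^n$ for $n\geq1$ yields $\Res_{z=0}^F\mathcal{S}_n^Ff\,{\rm d}z=0$, proving \eqref{s2eqn9} over $\mathbb{Q}$-algebras and hence, by the Lazard reduction, over any ring.

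As a remark, one could instead avoid the logarithm and differentiate $\Res_{z=0}f(F(z,w))\,p_F(z)\,{\rm d}z$ with respect to $w$, using the invariant-differential identities $p_F(F(z,w))F^{1,0}(z,w)=p_F(z)$ and $p_F(F(z,w))F^{0,1}(z,w)=p_F(w)$ read off from \eqref{Invariant_One_Form_Equivalent_Char} to recognize the integrand as a $z$-derivative; but this route again produces only $n\cdot\Res_{z=0}^F\mathcal{S}_n^Ff\,{\rm d}z=0$ and so would still need Lazard's theorem to conclude.
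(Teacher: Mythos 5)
Your proof is correct, but for the key identity \eqref{s2eqn9} you take a genuinely different route from the paper. Both arguments first restate \eqref{s2eqn9} as the claim that $\Res^F_{z=0} i_{z,\underline{w}}f(F(z,w))\,{\rm d}z$ is constant in $w$, and both deduce \eqref{s2eqn10} from \eqref{s2eqn9} via the Leibniz rule \eqref{s2eqn7} exactly as you do. The difference is how the constancy in $w$ is established: you reduce to $\mathbb{Q}$-algebras via Lazard's theorem (your reduction is sound --- for each monomial $z^m$ the residue is a finite universal expression in $F$-binomial coefficients and the coefficients of $p_F$, hence the image of an element of the torsion-free ring $L$) and then change to the additive coordinate with the logarithm. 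The paper instead stays over an arbitrary ring $R$ and applies the residue substitution rule \eqref{s2eqn3} to $h(z)=F(z,w)-w$, viewed as a power series over the coefficient ring $R\lb w\rb$: the identity $p_F(F(z,w))F^{1,0}(z,w)=p_F(z)$, read off from \eqref{Invariant_One_Form_Equivalent_Char} with ${\rm d}w=0$, exhibits the integrand as $G(h(z))h'(z)$ with $G(v)=i_{v,\underline{w}}f(v+w)p_F(v+w)$, and the resulting additive-shift residue is evaluated by a direct binomial computation. So the paper avoids Lazard's theorem entirely here --- notably, it also avoids the ``differentiate in $w$'' trap you correctly flag, since treating $w$ as a constant in the substitution rule yields the full constancy statement rather than only $n\cdot\Res^F_{z=0}\mathcal{S}_n^Ff\,{\rm d}z=0$. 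Your approach buys a shorter conceptual argument at the cost of importing Lazard's theorem (which the paper does use elsewhere, e.g., for \eqref{Gdiagonal} and \eqref{s2eqn12}, so this is an accepted tool in context); the paper's approach buys a self-contained, torsion-robust proof.
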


\begin{proof}
 Summing \eqref{s2eqn9} times $w^n$ over $n$, we see by \eqref{def-F-hyper} that it suffices to show
 \[
 \Res^F_{z=0} i_{z,\underline{w}} f(F(z,w)){\rm d}z=\Res^F_{z=0}f(z){\rm d}z.
 \]
 We will apply the substitution rule \eqref{s2eqn3} to the power series $h(z)=F(z,w)-w$ with coefficients in the ring $R\lb w\rb$. This corresponds to viewing $w$ as a fixed constant. Putting ${\rm d}w=0$ in \eqref{Invariant_One_Form_Equivalent_Char} gives $p_F(F(z,w))F^{1,0}(z,w)=p_F(z)$. Clearly, $h(0)=0$ and $h'(0)=F^{1,0}(0,w)=p_F(w)^{-1}$ is a unit, so the hypotheses for \eqref{s2eqn3} are satisfied. Applying this and unravelling the definition gives
 \begin{align*}
 \Res^F_{z=0} i_{z,\underline{w}} f(F(z,w)){\rm d}z&=\Res_{z=0} i_{z,\underline{w}} f(F(z,w))p_F(z){\rm d}z\\
 &=\Res_{z=0} i_{z,\underline{w}} f(F(z,w))p_F(F(z,w))F^{1,0}(z,w){\rm d}z\\
 &=\Res_{z=0} i_{z,\underline{w}} f(h(z)+w)p_F(h(z)+w)h'(z){\rm d}z\\
 &=\Res_{v=0} i_{v,\underline{w}} f(v+w)p_F(v+w){\rm d}v.
 \end{align*}
 Let \[
 g(z)=f(z)p_F(z)=\sum_{n\in\mathbb{Z}}a_nz^n\in R(\!( z)\!).\]
 It remains to prove that
 \[
 \Res_{v=0} i_{v,\underline{w}}g(v+w){\rm d}v=\Res_{z=0}g(z){\rm d}z,
 \]
 which is a straight-forward computation
 \[
 \Res_{v=0} i_{v,\underline{w}}g(v+w){\rm d}v=\Res_{v=0} \sum_{n,k\in\mathbb{Z}}a_n\binom{n}{k}v^{n-k}w^k {\rm d}v=a_{-1},
 \]
 where we have used that $\binom{n}{n+1}=0$ for $n\neq-1$ and $\binom{-1}{0}=1$.

 Finally, combining \eqref{s2eqn7} with \eqref{s2eqn9} gives \eqref{s2eqn10}.
\end{proof}

\begin{Theorem}
 For all $f\in R\big\lb x_0^{\pm1}, x_1^{\pm1}, x_2^{\pm1}\big\rb$ such that the substitutions $f(x_1-x_2,x_1,x_2)$, $f(x_0,x_1,x_1-x_0)$ converge in the algebraic sense,
 \begin{gather}
 \Res_{x_1=0}\Res_{x_2=0}i_{x_1,x_2} f(x_1-x_2,x_1,x_2) {\rm d}x_2 {\rm d}x_1\nonumber\\
\qquad -\Res_{x_2=0}\Res_{x_1=0}i_{x_2,x_1} f(x_1-x_2,x_1,x_2) {\rm d}x_1 {\rm d}x_2\nonumber\\
 \phantom{\qquad -}{}=\Res_{x_1=0}\Res_{x_0=0}i_{x_1,x_0} f(x_0,x_1,x_1-x_0) {\rm d}x_0 {\rm d}x_1. \label{s2eqn11}
 \end{gather}
 More generally,
 \begin{gather}
 \Res^F_{z_1=0}\Res^F_{z_2=0}i_{z_1,z_2} f(F(z_1,\iota(z_2)),z_1,z_2) {\rm d}z_2 {\rm d}z_1\nonumber\\
 \qquad-\Res^F_{z_2=0}\Res^F_{z_1=0}i_{z_2,z_1} f(F(z_1,\iota(z_2)),z_1,z_2) {\rm d}z_1 {\rm d}z_2\nonumber\\
 \phantom{\qquad -}{}=\Res^F_{z_1=0}\Res^F_{z_0=0}i_{z_1,z_0} f(z_0,z_1,F(z_1,\iota(z_0))) {\rm d}z_0 {\rm d}z_1. \label{s2eqn12}
 \end{gather}
\end{Theorem}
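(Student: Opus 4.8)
The plan is to treat \eqref{s2eqn11} as the additive special case, prove it by pairing the delta-function Jacobi identity against the test series $f$, and then obtain the general identity \eqref{s2eqn12} from it by the logarithmic change of variables that reduces formal-group computations to the additive law.

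For \eqref{s2eqn11} I would specialise the $F$-Jacobi identity of Proposition~\ref{PropJacobiDelta}(b) to the additive law $F_a$, where $F(z,\iota w)=z-w$, obtaining the equality of bilateral series
\[
 i_{x_1,\underline{x_2}}x_0^{-1}\delta_{F_a}\!\left(\frac{x_1-x_2}{x_0}\right)
 -i_{x_2,\underline{x_1}}x_0^{-1}\delta_{F_a}\!\left(\frac{x_1-x_2}{x_0}\right)
 =i_{x_1,\underline{x_0}}x_2^{-1}\delta_{F_a}\!\left(\frac{x_1-x_0}{x_2}\right).
\]
Multiplying by $f(x_0,x_1,x_2)$ and extracting the coefficient of $x_0^{-1}x_1^{-1}x_2^{-1}$, I would use the substitution property of the delta, namely that taking the $x_0$-residue of $x_0^{-1}\delta_{F_a}((x_1-x_2)/x_0)\,h(x_0)$ replaces $x_0$ by $x_1-x_2$. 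The two left-hand terms then become $i_{x_1,x_2}f(x_1-x_2,x_1,x_2)$ and $i_{x_2,x_1}f(x_1-x_2,x_1,x_2)$; the point is that the two expansion orders $\underline{x_2}$ and $\underline{x_1}$ in the delta are exactly what generate the two expansion maps $i_{x_1,x_2}$ and $i_{x_2,x_1}$. Likewise the right-hand term becomes $i_{x_1,x_0}f(x_0,x_1,x_1-x_0)$ after the $x_2$-residue. Taking the remaining residues reproduces the three iterated residues of \eqref{s2eqn11}, and the hypotheses that $f(x_1-x_2,x_1,x_2)$ and $f(x_0,x_1,x_1-x_0)$ converge are precisely what force these products of a distribution with $f$ to converge, so that the coefficient extraction is legitimate. (Alternatively one may bypass the delta and extract coefficients directly, reducing \eqref{s2eqn11}, for each multidegree $(a,b,c)$ with $a+b+c=-2$, to a single Vandermonde-type binomial identity.)

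To deduce \eqref{s2eqn12}, I would invoke Lazard's theorem to reduce to the torsion-free universal case and rationalise, so that we may assume $\mathbb{Q}\subset R$ and $F(z,w)=\phi^{-1}(\phi(z)+\phi(w))$ for a logarithm $\phi$. Setting $x_i=\phi(z_i)$ and $g(x_0,x_1,x_2)=f\bigl(\phi^{-1}(x_0),\phi^{-1}(x_1),\phi^{-1}(x_2)\bigr)$, the homomorphism property $\phi(\iota z)=-\phi(z)$ gives $\phi\bigl(F(z_1,\iota z_2)\bigr)=x_1-x_2$, whence $f\bigl(F(z_1,\iota z_2),z_1,z_2\bigr)=g(x_1-x_2,x_1,x_2)$ and $f\bigl(z_0,z_1,F(z_1,\iota z_0)\bigr)=g(x_0,x_1,x_1-x_0)$; in particular the two convergence hypotheses of \eqref{s2eqn12} turn into those of \eqref{s2eqn11} for $g$. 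By the change-of-variables formula \eqref{s2eqn5}, each $\Res^F_{z_i}$ becomes an ordinary residue $\Res_{x_i}$, so applying \eqref{s2eqn11} to $g$ yields \eqref{s2eqn12}. (Since the substitution property passes to the $F$-delta via \eqref{deFdeAdditive2}, one could equally prove \eqref{s2eqn12} directly by pairing $f$ against the full $F$-Jacobi identity, avoiding the logarithm altogether.)

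I expect the main obstacle to be the bookkeeping of expansion maps and the convergence of the distributional products. The three terms are bilateral, not Laurent, in the variable carrying the delta, so their products with $f$ are only conditionally defined; the convergence hypotheses on $f$ are exactly what make these products converge, and the diagonal support \eqref{DiagonalSupport2} of the delta is what lets one perform the residue in the delta variable and land in a genuine two-variable Laurent ring $R(\!(x_1)\!)(\!(x_2)\!)$, $R(\!(x_2)\!)(\!(x_1)\!)$ or $R(\!(x_1)\!)(\!(x_0)\!)$, where the remaining iterated residue is unambiguous. In the reduction to \eqref{s2eqn12}, the delicate point is instead that $z_i\mapsto\phi^{-1}(x_i)$ is filtration-preserving (as $\phi'(0)=1$), so that by the functoriality of the substitution morphisms of Section~\ref{s34} it intertwines $i_{z_1,z_2}$ with $i_{x_1,x_2}$; confirming this compatibility of the expansion maps with the logarithmic substitution is the crux of transporting \eqref{s2eqn11} to \eqref{s2eqn12}.
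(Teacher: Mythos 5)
Your proof is correct, and for the additive identity \eqref{s2eqn11} it takes a genuinely different route from the paper. The paper proves \eqref{s2eqn11} by direct computation: it reduces to monomials $f=x_0^ax_1^bx_2^c$, writes out the three binomial expansions of $i_{x_1,x_2}f(x_1-x_2,x_1,x_2)$, $i_{x_2,x_1}f(x_1-x_2,x_1,x_2)$ and $i_{x_1,x_0}f(x_0,x_1,x_1-x_0)$ explicitly, and observes that the three iterated residues reduce to a single binomial identity among $\binom{a}{\cdot}$ and $\binom{c}{\cdot}$ valid for $a+b+c=-2$ --- exactly the parenthetical alternative you mention. Your main route, pairing the additive delta Jacobi identity against $f$ and extracting the coefficient of $x_0^{-1}x_1^{-1}x_2^{-1}$, is the classical Lepowsky--Li-style argument; it buys conceptual transparency (the theorem is literally the Jacobi identity tested against $f$) at the cost of the distributional bookkeeping you flag. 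One caution there: the hypotheses on $f$ do not make the full bilateral products of the three delta terms with $f$ converge --- only the coefficients you actually extract are guaranteed finite (the $x_0^{-1}$-, respectively $x_2^{-1}$-coefficient of each product is precisely the substitution series, and the remaining residues of that series are the finite sums the hypotheses control) --- so the argument must be phrased, as you indeed phrase it, as a coefficient extraction rather than as an identity of convergent products. For \eqref{s2eqn12} your reduction is identical to the paper's: invoke Lazard's theorem to pass to the torsion-free universal case, rationalise so that $F=\phi^{-1}(\phi(z)+\phi(w))$, substitute $x_i=\phi(z_i)$ and apply \eqref{s2eqn5}; the compatibility of the expansion maps with this logarithmic substitution, which you rightly single out as the crux, is exactly the point the paper's one-line reduction leaves implicit.
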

\begin{proof}
 It suffices to check \eqref{s2eqn11} for $f=x_0^a x_1^b x_2^c$, where
 \begin{align*}
 i_{x_1,x_2} f(x_1-x_2,x_1,x_2)&=\sum_{k\geqslant0}(-1)^k\binom{a}{k} x_1^{a+b-k}x_2^{c+k},\\
 i_{x_2,x_1} f(x_1-x_2,x_1,x_2)&=\sum_{k\geqslant0}(-1)^{a+k}\binom{a}{k} x_1^{b+k}x_2^{a+c-k},\\
 i_{x_1,x_0} f(x_0,x_1,x_1-x_0)&=\sum_{k\geqslant0}(-1)^k\binom{c}{k}x_0^{a+k}x_1^{b+c-k}.
 \end{align*}
 Taking residues, we see that \eqref{s2eqn11} reduces to the binomial identity
 \[
 (-1)^{c-1}\binom{a}{1-c}-(-1)^{a+b-1}\binom{a}{1-b}=(-1)^{a-1}\binom{c}{1-a}
 \]
 for all $a+b+c+2=0$.

 It suffices to prove \eqref{s2eqn12} for the universal formal group law, where we may assume~\eqref{s1eqn9}. In this case, we can use \eqref{s2eqn5} to reduce \eqref{s2eqn12} to \eqref{s2eqn11} for the function $f(x_0,x_1,x_2)$, where $\phi(z_i)=x_i$, $i=0,1,2$.
\end{proof}

\section[Vertex F-algebras and their Lie algebras]{Vertex $\boldsymbol{F}$-algebras and their Lie algebras}
\label{s4}

\subsection{Axioms}
\label{s41}

\begin{Definition}
\label{s3dfn1}
A \emph{vertex $F$-algebra} over a formal group law $F(z,w)\in R\lb z,w\rb$ consists of data $(V,\mathbbm{1},\mathcal{S},Y)$ as follows:
\begin{itemize}\itemsep=0pt
\item
an $R$-module $V$ of \emph{states},
\item
a \emph{vacuum vector} $\mathbbm{1}\in V$,
\item
an $R$-linear \emph{$F$-shift operator}
\begin{align}
\label{F-shift}
\mathcal{S}(z)\colon V \longrightarrow V\lb z\rb,\qquad
\mathcal{S}(z)a=\sum_{n\geqslant0} \mathcal{S}^{(n)}(a)z^n,
\end{align}
\item
an $R$-linear \emph{state-to-field correspondence}
\begin{gather}
 V\otimes_R V \longrightarrow V(\!( z)\!),\qquad a\otimes b\longmapsto Y(a,z)b,\nonumber\\
 \label{s3eqn10}
 Y(a,z)b = \sum_{n\in\mathbb{Z}} a_{(n)}(b)z^{-n-1},\qquad a_{(n)}(b)=0\qquad \text{for} \ n\gg 0.
\end{gather}
\end{itemize}
The following axioms are required:
\begin{enumerate}\itemsep=0pt
	\item[$(1)$] \emph{Vacuum \& creation:} $Y(a,z)\mathbbm{1}$ is holomorphic for all $a\in V$ and
	\begin{align}
	 Y(a,z)\mathbbm{1} |_{z=0} = a,\qquad
 Y(\mathbbm{1},z) = \id_V.\label{s3eqn4}
	\end{align}
	\item[$(2)$] {\it $F$-translation covariance:} for all $a, b \in V$, we have
	\begin{align}
	 Y(\mathcal{S}(w)(a),z)b = i_{z,w} Y(a, F(z,w))b,\qquad
	 \mathcal{S}(z)\mathbbm{1} = \mathbbm{1}.\label{s3eqn5}
	\end{align}
	In \eqref{s3eqn5}, we have used the substitution \eqref{substitution2}. Moreover,
	$\mathcal{S}(z)\circ\mathcal{S}(w) =\mathcal{S}(F(z,w))$,
	$\mathcal{S}(0)=\id_V$.
	\item[$(3)$] \emph{Weak $F$-associativity:} for all $a,b,c \in V$, there exists $N \geqslant 0$ with
 \begin{equation}
 \label{s3eqn7}
 F(z,w)^N Y(Y(a,z)b,w)c = F(z,w)^N i_{z,w} Y(a, F(z,w)) Y(b,w) c,
 \end{equation}
 using the substitution $v\mapsto F(z,w)$ from \eqref{substitution1} on the right-hand side.
 \item[$(4)$] \emph{Skew symmetry:}
 \begin{equation}
 \label{s3eqn8}
 Y(a,z)b =\mathcal{S}(z) \circ Y(b, \iota(z))a.
 \end{equation}
\end{enumerate}
\end{Definition}

Putting $b=\mathbbm{1}$ into \eqref{s3eqn8} and using \eqref{s3eqn4} gives
\begin{equation}
\label{s3YD}
 Y(a,z)\mathbbm{1}=\mathcal{S}(z)a.
\end{equation}

\begin{Remark}
The axioms are slightly redundant. For example, \eqref{s3eqn5} implies \eqref{s3eqn7} for $b=\mathbbm{1}$. On the other hand, we cannot deduce \eqref{s3eqn5} from \eqref{s3eqn7} unless we know a priori that both sides of \eqref{s3eqn5} are meromorphic in $z$, $w$. Hence \eqref{s3eqn5} encodes a `meromorphicity' property.
\end{Remark}

\begin{Remark}
For an ordinary vertex algebra, one usually works with the translation operator~$D$, see~\cite{LL}. The shift operator then is $\mathcal{S}(z)=\exp(zD)$.
\end{Remark}

\begin{Remark}\label{ConjecturalRemark}
Huang \cite{Huang} has given a geometric interpretation of ordinary vertex algebras in terms of punctured Riemann spheres. For elliptic formal group laws, are vertex $F$-algebras analogously interpreted over an elliptic curve?

The author thanks an anonymous referee for pointing out that, in Huang's geometric interpretation of ordinary vertex algebras, conformal invariance plays a crucial role. In the genus one setting, an invariant construction requires not only the vertex algebra itself but also its modules and intertwining operators, whose matrix coefficients need not be meromorphic. Full modular invariance is presently established only in the $C_2$-cofinite (rational) case due to Huang \cite{HuangModular}, and more recently in certain $C_1$-cofinite settings in work of Creutzig--McRae--Yang \cite{CMY}. It therefore remains unclear whether a theory of vertex $F$-algebras over an elliptic curve could satisfy such invariance properties, although a formulation with weaker invariance (topological or birational) might still be meaningful.
\end{Remark}

\subsection{Meromorphicity}
\label{s42}

The meromorphicity assumption in \eqref{s3eqn10} can be combined with other axioms to prove the meromorphicity of various operator products and compositions. These, in turn, imply further non-trivial axioms. Here is a simple example of this principle.

\begin{Proposition}
For all $a,b \in V$ in a vertex $F$-algebra,
\begin{align}
\label{s3eqn9}
 i_{z,w}Y(a,F(z,w))\circ\mathcal{S}(w)b&=\mathcal{S}(w)\circ Y(a,z)b.
\end{align}
\end{Proposition}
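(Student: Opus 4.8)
The plan is to apply skew symmetry \eqref{s3eqn8} to \emph{both} sides of \eqref{s3eqn9}, converting every occurrence of $Y(a,\cdot)$ into $Y(b,\cdot)a$ preceded by a shift operator, and then to check that the two resulting expressions agree after invoking $F$-translation covariance \eqref{s3eqn5} and the composition law $\mathcal{S}(z)\circ\mathcal{S}(w)=\mathcal{S}(F(z,w))$. The only genuinely new input will be a formal group identity for the inverse, together with the functoriality of substitutions established in Section~\ref{s34}, which is what makes the manipulation of expansion maps legitimate. Notably, this argument should not require weak $F$-associativity.

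For the right-hand side, I would first apply skew symmetry in the single variable $z$ and then act by $\mathcal{S}(w)$:
\[
\mathcal{S}(w)\circ Y(a,z)b=\mathcal{S}(w)\mathcal{S}(z)Y(b,\iota(z))a=\mathcal{S}(F(z,w))Y(b,\iota(z))a,
\]
where the last equality uses $\mathcal{S}(w)\circ\mathcal{S}(z)=\mathcal{S}(F(w,z))$ and the commutativity $F(w,z)=F(z,w)$; since $Y(b,\iota(z))a\in V(\!( z)\!)$, the result lies in $V(\!( z)\!)\lb w\rb$. For the left-hand side, I would introduce a fresh variable $t$ and apply skew symmetry (extended $R$-linearly and continuously in the second slot to $\mathcal{S}(w)b\in V\lb w\rb$), obtaining in $V(\!( t)\!)\lb w\rb$ the identity
\[
Y(a,t)\mathcal{S}(w)b=\mathcal{S}(t)\,Y(\mathcal{S}(w)b,\iota(t))a=\mathcal{S}(t)\,i_{\iota(t),w}Y(b,F(\iota(t),w))a,
\]
the second step being $F$-translation covariance \eqref{s3eqn5} with main variable $\iota(t)$. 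Substituting $t\mapsto i_{z,w}F(z,w)$ then turns the left-hand side into exactly $i_{z,w}Y(a,F(z,w))\mathcal{S}(w)b$, while on the right the factor $\mathcal{S}(t)$ becomes $\mathcal{S}(F(z,w))$ and, by functoriality of substitution, the composite expansion sends the field variable of $Y(b,\cdot)a$ to $F(\iota(F(z,w)),w)$.

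The crux is therefore the inverse identity $F(\iota(F(z,w)),w)=\iota(z)$, which I would deduce from $\iota(F(x,y))=F(\iota(x),\iota(y))$ and $F(F(z,w),\iota(w))=z$ (both immediate from commutativity, associativity and $F(u,\iota(u))=0$): writing $u=F(z,w)$ one has $z=F(u,\iota(w))$, hence $\iota(z)=F(\iota(u),w)$. With this, the substituted right-hand side collapses to $\mathcal{S}(F(z,w))Y(b,\iota(z))a$, matching the expression obtained above for $\mathcal{S}(w)\circ Y(a,z)b$, and \eqref{s3eqn9} follows. I expect the main obstacle to be not the (elementary) formal group computation but the careful bookkeeping of the expansion maps: one must verify that substituting $t\mapsto F(z,w)$ into the $V(\!( t)\!)\lb w\rb$-identity is the valid substitution morphism of Section~\ref{s34} (its lowest term $z$ is a unit and it has positive complete filtration), and that composing it with the expansions $i_{\iota(t),w}$ coming from translation covariance produces precisely the single expansion landing in $V(\!( z)\!)(\!( w)\!)$ — which is exactly where functoriality of substitutions is needed.
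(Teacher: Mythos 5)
Your proof is correct, but it takes a genuinely different route from the paper. The paper's argument is much shorter: it puts $c=\mathbbm{1}$ into weak $F$-associativity \eqref{s3eqn7}, uses $Y(d,w)\mathbbm{1}=\mathcal{S}(w)d$ from \eqref{s3YD} to turn both sides into the two sides of \eqref{s3eqn9} multiplied by $F(z,w)^N$, and then cancels $F(z,w)^N$ by observing that both sides lie in $V(\!( z,w)\!)$ (by \eqref{F-shift}, \eqref{s3eqn10} and \eqref{s3eqn5}), so the identity can be read in $V(\!( z)\!)(\!( w)\!)$ where $F(z,w)$ is invertible. You instead avoid axiom \eqref{s3eqn7} altogether and derive \eqref{s3eqn9} from skew symmetry \eqref{s3eqn8}, translation covariance \eqref{s3eqn5} and the composition law $\mathcal{S}(z)\circ\mathcal{S}(w)=\mathcal{S}(F(z,w))$, reducing everything to the formal group identity $F(\iota(F(z,w)),w)=\iota(z)$; the substitution bookkeeping you flag is indeed the only delicate point, and it is covered by the functoriality corollary of Section~\ref{s34} exactly as you describe. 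What each approach buys: the paper's proof is a two-line consequence of the (deliberately redundant) axiom system and illustrates the meromorphicity-and-cancellation technique that recurs throughout Section~\ref{s42}, whereas yours shows that \eqref{s3eqn9} already follows from axioms (2) and (4) alone, with no choice of $N$ and no cancellation of $F(z,w)^N$ --- a slightly stronger statement about the logical dependencies among the axioms, at the cost of heavier manipulation of expansion maps.
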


\begin{proof}
 Putting $c=\mathbbm{1}$ into \eqref{s3eqn7} and using \eqref{s3YD} shows
 \begin{equation}
 \label{Faslkf}
 F(z,w)^N \bigl(\mathcal{S}(w)\circ Y(a,z)b \bigr) = F(z,w)^N\bigl(i_{z,w}Y(a,F(z,w))\circ\mathcal{S}(w)b\bigr)
 \end{equation}
 for some $N\geqslant0$. Since $\mathcal{S}(w)\circ Y(a,z)b \in V(\!( z,w)\!)$ by \eqref{F-shift} and \eqref{s3eqn10} and $Y(a,F(z,w))\mathcal{S}(w)b \in V(\!( z,w)\!)$ by \eqref{s3eqn5} and \eqref{s3eqn10}, we can embed equation \eqref{Faslkf} into $V(\!( z)\!)(\!( w)\!)$, where $F(z,w)$ is invertible. Equation \eqref{s3eqn9} follows.
\end{proof}

Here is the fundamental meromorphicity property of vertex $F$-algebras.

\begin{Proposition}
\label{prop:pabc}
For each $a,b,c\in V$, both sides of \eqref{s3eqn7} are a Laurent series $p_{a,b,c}(z,w)\in V(\!( z,w)\!)$. The formal fraction
\begin{equation}
\label{fABC}
 f_{a,b,c}(z,w)=\frac{p_{a,b,c}(z,w)}{F(z,w)^N}\in V(\!( z,w)\!)\big[F(z,w)^{-1}\big]
\end{equation}
is independent of $N$. Under the homomorphisms $i_{w,z}$ and $i_{z,w}$ from $V(\!( z,w)\!)\big[F(z,w)^{-1}\big]$ into $V(\!( w)\!)(\!( z)\!)$ and $V(\!( w)\!)(\!( z)\!)$, respectively, we have
\begin{align}
 i_{w,z} f_{a,b,c}(z,w)&=Y(Y(a,z)b,w)c,\label{s3eqn12}\\
 i_{z,w} f_{a,b,c}(z,w)&=i_{z,w}Y(a,F(z,w))Y(b,w)c.\label{s3eqn13}
\end{align}
\end{Proposition}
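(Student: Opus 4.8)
The plan is to exhibit the two sides of weak $F$-associativity \eqref{s3eqn7} as elements of the two \emph{different} iterated Laurent rings $V(\!( w)\!)(\!( z)\!)$ and $V(\!( z)\!)(\!( w)\!)$, both viewed inside the bilateral series $V\big\lb z^{\pm1},w^{\pm1}\big\rb$, and then to use the intersection formula $V(\!( z,w)\!)=V(\!( z)\!)(\!( w)\!)\cap V(\!( w)\!)(\!( z)\!)$ to conclude that their common value is meromorphic. The identities \eqref{s3eqn12} and \eqref{s3eqn13} will then fall out by applying the two expansion maps to the resulting fraction.

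First I would pin down the ambient rings. The truncation condition in \eqref{s3eqn10} gives $Y(a,z)b\in V(\!( z)\!)$, and applying $Y(\,\cdot\,,w)c$ coefficientwise yields
\[
 Y(Y(a,z)b,w)c=\sum_{n}z^{-n-1}\,Y\big(a_{(n)}(b),w\big)c,
\]
whose $z$-degrees are bounded below (since $a_{(n)}(b)=0$ for $n\gg0$) and each of whose coefficients $Y(a_{(n)}(b),w)c$ lies in $V(\!( w)\!)$; hence this element lies in $V(\!( w)\!)(\!( z)\!)$, and multiplication by the holomorphic factor $F(z,w)^N$ keeps it there. On the other side, the substitution map \eqref{substitution1} sends $Y(a,v)Y(b,w)c\in V(\!( v)\!)(\!( w)\!)$ into $V(\!( z)\!)(\!( w)\!)$, so $F(z,w)^N i_{z,w}Y(a,F(z,w))Y(b,w)c\in V(\!( z)\!)(\!( w)\!)$. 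By \eqref{s3eqn7} these two elements of $V\big\lb z^{\pm1},w^{\pm1}\big\rb$ coincide; lying simultaneously in $V(\!( w)\!)(\!( z)\!)$ and $V(\!( z)\!)(\!( w)\!)$, their common value
\[
 p_{a,b,c}(z,w)=F(z,w)^N Y(Y(a,z)b,w)c
\]
belongs to the intersection $V(\!( z,w)\!)$, which is the asserted meromorphicity.

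For independence of $N$, note that if \eqref{s3eqn7} holds for some $N$ then it holds for every $N'\geqslant N$ after multiplying by $F(z,w)^{N'-N}$, with numerator $F(z,w)^{N'-N}p_{a,b,c}$; hence $f_{a,b,c}=p_{a,b,c}/F(z,w)^N$ is unchanged in the localization $V(\!( z,w)\!)\big[F(z,w)^{-1}\big]$. To recover \eqref{s3eqn12} and \eqref{s3eqn13}, recall from Example~\ref{Ex_Main_Example_Expansions} (applied with both orderings, as $F(z,w)=z+w+\cdots$ has unit lowest coefficient in each case) that $F(z,w)$ is invertible in both iterated rings, so $i_{w,z}$ and $i_{z,w}$ are defined on $V(\!( z,w)\!)\big[F(z,w)^{-1}\big]$, are ring homomorphisms preserving products, and send $F(z,w)^{-1}$ to the genuine inverse of the expansion of $F$. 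Since $p_{a,b,c}\in V(\!( z,w)\!)$, on it both maps restrict to the natural inclusion by \eqref{swap-expansion}; therefore $i_{w,z}f_{a,b,c}=i_{w,z}(p_{a,b,c})\cdot i_{w,z}(F)^{-N}$, and since $i_{w,z}(p_{a,b,c})=F(z,w)^N Y(Y(a,z)b,w)c$ in $V(\!( w)\!)(\!( z)\!)$ while $F(z,w)^N\cdot i_{w,z}(F)^{-N}=1$, this reduces to $Y(Y(a,z)b,w)c$; symmetrically $i_{z,w}f_{a,b,c}=i_{z,w}Y(a,F(z,w))Y(b,w)c$.

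I expect the main obstacle to be the first step: carefully tracking the bounded-below conditions on the $z$- and $w$-degrees through the iterated double sums and through the substitution $v\mapsto F(z,w)$, so as to certify that each side of \eqref{s3eqn7} genuinely lies in its claimed iterated ring and that the intersection formula therefore applies. Once meromorphicity of $p_{a,b,c}$ is secured, the passage to the fraction and the identification of its two expansions are routine bookkeeping with the functoriality and product-preservation of the expansion maps together with the fact that they invert $F(z,w)$.
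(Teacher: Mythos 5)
Your proposal is correct and follows essentially the same route as the paper: both sides of \eqref{s3eqn7} are located in $V(\!( w)\!)(\!( z)\!)$ and $V(\!( z)\!)(\!( w)\!)$ respectively, their equality places the common value in the intersection $V(\!( z,w)\!)$, and \eqref{s3eqn12}--\eqref{s3eqn13} follow by inverting $F(z,w)$ in each iterated ring. Your explicit remarks on the independence of $N$ and on the expansion maps inverting $F$ are fine additions but do not change the argument.
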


\begin{proof}
Using \eqref{s3eqn10}, we find that
\[
 Y(Y(a,z)b,w)c = \sum_{m,n\in\mathbb{Z}} \bigl(a_{(n)}(b)\bigr)_{(m)}(c) z^{-n-1}w^{-m-1}
\]
contains only finitely many negative powers of $z$. Moreover, for a fixed power of $z$, there are only finitely many negative powers of $w$. Hence the left-hand side of \eqref{s3eqn7} belongs to $V(\!( w)\!)(\!( z)\!)$. By definition of the substitution \eqref{substitution1}, the right-hand side of \eqref{s3eqn7} belongs to $V(\!( z)\!)(\!( w)\!)$. Hence both sides of \eqref{s3eqn7} are equal to a common series $p_{a,b,c}(z,w)\in V(\!( z,w)\!)$ in the intersection. If we embed $p_{a,b,c}(z,w)$ into $V(\!( w)\!)(\!( z)\!)$, we get $F(z,w)^N Y(Y(a,z)b,w)c$ and the series $F(z,w)$ is invertible in $V(\!( w)\!)(\!( z)\!)$, so we may rearrange and get \eqref{s3eqn12}. Similarly, we may embed into~${V(\!( z)\!)(\!( w)\!)}$ and rearrange to get \eqref{s3eqn13}.
\end{proof}

\begin{Proposition}[weak commutativity]
 For all $a,b,c\in V$ in a vertex $F$-algebra there exists~${M\geqslant0}$ with
\begin{equation}\label{s3eqn11}
 F(z,\iota w)^M Y(a,z)Y(b,w)c=F(z,\iota w)^M Y(b,w)Y(a,z)c.
\end{equation}
\end{Proposition}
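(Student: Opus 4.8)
The plan is to realize both $Y(a,z)Y(b,w)c$ and $Y(b,w)Y(a,z)c$ as the two expansions $i_{z,w}$ and $i_{w,z}$ of a single meromorphic representative $g_{a,b,c}(z,w)\in V(\!( z,w)\!)\big[F(z,\iota w)^{-1}\big]$, whose only poles beyond those permitted in $V(\!( z,w)\!)$ lie along $F(z,\iota w)=0$. Granting this, the proposition follows at once: choosing $M$ so large that $F(z,\iota w)^M g_{a,b,c}\in V(\!( z,w)\!)$, and using that expansion maps preserve products while fixing the holomorphic series $F(z,\iota w)^M$, the two sides of \eqref{s3eqn11} become the $i_{z,w}$- and $i_{w,z}$-expansions of the \emph{same} element of $V(\!( z,w)\!)$; they therefore agree by \eqref{swap-expansion}.

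First I would produce $g_{a,b,c}$ from the product form of weak associativity. By \eqref{s3eqn13}, the series $i_{z,w}Y(a,F(z,w))Y(b,w)c$ is the image of $Y(a,v)Y(b,w)c\in V(\!( v)\!)(\!( w)\!)$ under the substitution \eqref{substitution1}, $v\mapsto i_{z,w}F(z,w)$. Applying the inverse substitution \eqref{substitution3}, $z\mapsto i_{v,w}F(v,\iota w)$, which is a ring isomorphism sending $F(z,w)\mapsto F(F(v,\iota w),w)=v$, I recover
\[
Y(a,v)Y(b,w)c=i_{v,w}\frac{p_{a,b,c}(F(v,\iota w),w)}{v^N}.
\]
Relabelling $v$ as $z$ gives $Y(a,z)Y(b,w)c=i_{z,w}\,g_{a,b,c}(z,w)$ with $g_{a,b,c}(z,w)=z^{-N}p_{a,b,c}(F(z,\iota w),w)$. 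Writing $p_{a,b,c}=z^{-K}w^{-K}\cdot(\text{holomorphic})$ for $K$ large and substituting $z\mapsto F(z,\iota w)$ shows $g_{a,b,c}\in V(\!( z,w)\!)\big[F(z,\iota w)^{-1}\big]$, with additional poles only along $F(z,\iota w)=0$.

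Running the identical construction with $a,b$ and $z,w$ interchanged yields $Y(b,w)Y(a,z)c=i_{w,z}\,g_{b,a,c}(w,z)$; since $F(w,\iota z)=\iota F(z,\iota w)$ differs from $F(z,\iota w)$ by a unit (compare \eqref{differByUnit}), this lands in the same localized ring. The crux is then the symmetry
\[
g_{a,b,c}(z,w)=g_{b,a,c}(w,z)\qquad\text{in }V(\!( z,w)\!)\big[F(z,\iota w)^{-1}\big].
\]
To prove it I would compare the iterates defining $p_{a,b,c}$ and $p_{b,a,c}$ through skew symmetry \eqref{s3eqn8}: writing $Y(a,z)b=\mathcal{S}(z)Y(b,\iota z)a$ and applying $F$-translation covariance \eqref{s3eqn5} gives
\[
Y(Y(a,z)b,w)c=i_{w,z}Y(Y(b,\iota z)a,F(w,z))c,
\]
and a second application of the product form of associativity, simplified by the group identities $F(\iota z,F(w,z))=w$ and $F(F(v,\iota w),w)=v$, matches this against $g_{b,a,c}(w,z)$. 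These formal-group identities, awkward directly, are dispatched by Lazard's theorem: it suffices to verify them over $\mathbb{Q}$, where \eqref{s1eqn9} reduces them to the additive case via the logarithm.

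The main obstacle is precisely this symmetry step. One must keep strict account of which iterated ring each expression is expanded in, confirm that skew symmetry and the substitutions genuinely produce the \emph{same} meromorphic representative---not merely two representatives that agree after expansion---and check that no spurious poles are introduced, so that the discrepancy between the two orderings is confined to $F(z,\iota w)=0$. Once this identity is established, the product-form construction of the first paragraph and the final clearing of the pole via \eqref{swap-expansion} are routine.
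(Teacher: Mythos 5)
Your proposal is correct and follows essentially the same route as the paper: both operator products are realized as the $i_{z,w}$- and $i_{w,z}$-expansions of a common meromorphic representative $f_{a,b,c}(F(v,\iota w),w)=f_{b,a,c}(F(w,\iota v),v)$ obtained from weak associativity, skew symmetry and $F$-translation covariance (the paper's \eqref{wcc-eqn} and \eqref{ProofWC1}--\eqref{ProofWC3}), after which the pole along $F(z,\iota w)=0$ is cleared so that \eqref{swap-expansion} applies. The only cosmetic differences are that the paper states the key symmetry as an identity of expansions rather than of representatives in the localized ring, and that your auxiliary identities $F(F(v,\iota w),w)=v$ and $F(\iota z,F(w,z))=w$ follow directly from associativity and the inverse axiom, so no appeal to Lazard's theorem is needed there.
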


We can replace $F(z,\iota w)^M$ by $(z-w)^M$ in \eqref{s3eqn11} as these differ by a unit \eqref{differByUnit}. Hence our vertex $F$-algebras satisfy also the axioms of Li~\cite{Li}.

\begin{proof}
 Combining skew symmetry with $F$-translation covariance gives
 \[
 Y(Y(a,z)b,w)c
 =i_{w,z} Y(Y(b,\iota z)a,F(z,w))c.
 \]
 By \eqref{s3eqn12}, we can reexpress this identity as
 \begin{equation}
 \label{wcc-eqn}
 i_{z,w}f_{a,b,c}(z,w)= i_{z,w}f_{b,a,c}(\iota z,F(z,w))
 \end{equation}
 in the ring $V(\!( z)\!)(\!( w)\!)$. Perform the substitution $z\to F(v,\iota w)$ from \eqref{substitution3} to get
 \begin{equation}
 \label{ProofWC1}
 i_{v,w}f_{a,b,c}(F(v,\iota w),w)=i_{v,w}f_{b,a,c}(F(w,\iota v),v).
 \end{equation}
 The same substitution applied to \eqref{s3eqn13} leads to
 \begin{equation}
 \label{ProofWC2}
 i_{v,w}f_{a,b,c}(F(v,\iota w),w)=Y(a,v)Y(b,w)c
 \end{equation}
 and exchanging $v\leftrightarrow w$, $a\leftrightarrow b$ gives
 \begin{equation}
 \label{ProofWC3}
 i_{w,v}f_{b,a,c}(F(w,\iota v),v)=Y(b,w)Y(a,v)c.
 \end{equation}
 Since $f_{b,a,c}(F(w,\iota v),v)=w^{-N}p_{b,a,c}(F(w,\iota v),v)$ by \eqref{fABC} and $p_{b,a,c}(u,v)$ contains only finitely many negative powers of $u$, there exists $M\geqslant0$ such that $F(w,\iota v)^M f_{b,a,c}(F(w,\iota v),v)$ lies in~${R(\!( v,w)\!)}$. From \eqref{swap-expansion} we thus have
 \[
 F(w,\iota v)^M i_{v,w}f_{b,a,c}(F(w,\iota v),v) = F(w,\iota v)^M i_{w,v}f_{b,a,c}(F(w,\iota v),v),
 \]
 which, combined with \eqref{ProofWC1}--\eqref{ProofWC3}, implies the result.
\end{proof}

\begin{Proposition}[Jacobi identity]
For all $a$, $b$ in a vertex $F$-algebra,
\begin{gather*}
i_{z_1,z_0}z_2^{-1}\delta_F\left(\frac{F(z_1,\iota z_0)}{z_2}\right)Y(Y(a,z_0)b,z_2)\\
\qquad=i_{z_1,z_2}z_0^{-1}\delta_F\left(\frac{F(z_1,\iota z_2)}{z_0}\right)Y(a,z_1)Y(b,z_2)\\
\phantom{\qquad=}{}-i_{z_2,z_1}z_0^{-1}\delta_F\left(\frac{F(z_1,\iota z_2)}{z_0}\right)Y(b,z_2)Y(a,z_1).
\end{gather*}
\end{Proposition}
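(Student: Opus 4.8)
The plan is to derive the operator Jacobi identity from the \emph{bare} $F$-Jacobi identity for delta distributions established in Proposition~\ref{PropJacobiDelta}, by multiplying that identity through by the single meromorphic function $f_{a,b,c}$ furnished by Proposition~\ref{prop:pabc}. The key observation is that the three delta distributions in the statement are all supported on the \emph{same} locus: $z_2^{-1}\delta_F(F(z_1,\iota z_0)/z_2)$ is supported where $z_2=F(z_1,\iota z_0)$ and $z_0^{-1}\delta_F(F(z_1,\iota z_2)/z_0)$ where $z_0=F(z_1,\iota z_2)$, and by commutativity and associativity of $F$ both constraints are equivalent to $z_1=F(z_0,z_2)$. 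On this common locus the variable $z_1$ is redundant, so each of the three operator products should collapse to one and the same function of $z_0$ and $z_2$, after which the whole identity becomes a multiple of the bare one.

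Concretely, I would first record the identifications of the products with expansions of $f_{a,b,c}$: from \eqref{s3eqn12} (with $z\mapsto z_0$, $w\mapsto z_2$) one has $Y(Y(a,z_0)b,z_2)c=i_{z_2,z_0}f_{a,b,c}(z_0,z_2)$, and from \eqref{ProofWC2}, \eqref{ProofWC3} (with $v\mapsto z_1$, $w\mapsto z_2$) together with \eqref{ProofWC1} one sees that $Y(a,z_1)Y(b,z_2)c$ and $Y(b,z_2)Y(a,z_1)c$ are the two expansions $i_{z_1,z_2}$ and $i_{z_2,z_1}$ of the common meromorphic function $q(z_1,z_2)=f_{a,b,c}(F(z_1,\iota z_2),z_2)$. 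Next I would rewrite each term using the diagonal support property \eqref{DiagonalSupport}, \eqref{DiagonalSupport2}, carrying the third variable along in the coefficient ring. On the support of $\delta_F(F(z_1,\iota z_2)/z_0)$ the first argument $F(z_1,\iota z_2)$ equals $z_0$, so $q(z_1,z_2)$ may be replaced by $f_{a,b,c}(z_0,z_2)$; on the support of $\delta_F(F(z_1,\iota z_0)/z_2)$ the second argument $z_2$ equals $F(z_1,\iota z_0)$, so the delta only sees the diagonal value of $f_{a,b,c}(z_0,z_2)$ and the corresponding term is identified with $\delta_F(\cdots)\,Y(Y(a,z_0)b,z_2)c$. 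After these reductions the three terms are precisely $f_{a,b,c}(z_0,z_2)$ times the three delta distributions appearing in Proposition~\ref{PropJacobiDelta}, with matching expansion labels and signs, so the claimed identity drops out of the relation proved there.

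The main obstacle is the bookkeeping of expansion maps and the well-definedness of the products. In Proposition~\ref{PropJacobiDelta} one variable in each delta term is holomorphic (underlined), whereas multiplying by $f_{a,b,c}$ introduces the negative powers coming from the $F(z_0,z_2)^{-1}$ pole in \eqref{fABC}, so that variable must be promoted to a full Laurent variable; I must check that each product of the bilateral delta series with $f_{a,b,c}$ converges (using that the delta is supported on the diagonal and $f_{a,b,c}$ converges there, the $F(z_0,z_2)^{-1}$ becoming the harmless $z_1^{-1}$ on the support), that the relevant triple products are associative, and that multiplication by the meromorphic $f_{a,b,c}$ commutes with the expansions by \eqref{PowerSeriesLinearity} and \eqref{swap-expansion}. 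A cleaner route that sidesteps the pole is to clear denominators first, running the argument with $p_{a,b,c}=F(z_0,z_2)^N f_{a,b,c}\in V(\!( z_0,z_2)\!)$ and dividing back at the end. Once these compatibilities are secured, the algebraic content of the proof is exactly the delta-distribution identity of Proposition~\ref{PropJacobiDelta}.
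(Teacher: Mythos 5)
Your proposal is correct and follows essentially the same route as the paper: multiply the bare $F$-Jacobi identity of Proposition~\ref{PropJacobiDelta} by a common meromorphic function and use the diagonal-support property \eqref{DiagonalSupport} together with \eqref{s3eqn12}, \eqref{ProofWC2}, \eqref{ProofWC3} to identify each term with the corresponding operator product. The one technical obstacle you flag --- the pole of $f_{a,b,c}$ --- is resolved in the paper not by clearing $F(z_0,z_2)^N$ and ``dividing back at the end'' (cancelling $F(z_0,z_2)^N$ against bilateral series is not automatic), but by introducing the three-variable function $\phi_{a,b,c}(z_0,z_1,z_2)=p_{a,b,c}(z_0,z_2)/z_1^N$, whose only denominator is the monomial $z_1^N$ and which agrees with $f_{a,b,c}(z_0,z_2)$ on the support of each delta distribution.
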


\begin{proof}
Let $c\in V$. In the notation of \eqref{fABC}, set
\[
 \phi_{a,b,c}(z_0,z_1,z_2)=\frac{p_{a,b,c}(z_0,z_2)}{z_1^N}.
\]
Thus $\phi_{a,b,c}(z_0,z_1,z_2)=f_{a,b,c}(z_0,z_2)$ if $F(z_0,z_2)=z_1$. We have
\begin{gather}
i_{z_1,z_2}z_0^{-1}\delta_F\left(\frac{F(z_1,\iota z_2)}{z_0}\right)\phi_{a,b,c}(z_0,z_1,z_2)\nonumber\\
 \qquad\overset{\mathclap{\eqref{DiagonalSupport}}}{ = } i_{z_1,z_2}z_0^{-1}\delta_F\left(\frac{F(z_1,\iota z_2)}{z_0}\right) f_{a,b,c}(F(z_1,\iota z_2),z_2)\nonumber\\
\qquad \overset{\mathclap{\eqref{ProofWC2}}}{ = } i_{z_1,z_2}z_0^{-1}\delta_F\left(\frac{F(z_1,\iota z_2)}{z_0}\right)Y(a,z_1)Y(b,z_2).\label{ProofJacobi1}
\end{gather}
In the same way, \eqref{ProofWC3} implies
\begin{gather*}
 i_{z_2,z_1}z_0^{-1}\delta_F\left(\frac{F(z_1,\iota z_2)}{z_0}\right)\phi_{a,b,c}(z_0,z_1,z_2)\\
\qquad =i_{z_2,z_1}z_0^{-1}\delta_F\left(\frac{F(z_1,\iota z_2)}{z_0}\right) Y(b,z_2)Y(a,z_1).
\end{gather*}
The substitutions $z\to z_0$ and $w^{\pm1}\to i_{z_1,z_0}F(z_1,\iota z_0)^{\pm1}$ in \eqref{s3eqn12} give
\[ 
 i_{z_1,z_0} f_{a,b,c}(z_0,F(z_1,\iota z_0))=i_{z_1,z_0}Y(Y(a,z_0)b,F(z_1,\iota z_0))c.
\]
Therefore,
\begin{gather}
 i_{z_1,z_0}z_2^{-1}\delta_F\left(\frac{F(z_1,\iota z_0)}{z_2}\right)\phi_{a,b,c}(z_0,z_1,z_2)\nonumber\\
\qquad \overset{\mathclap{\eqref{DiagonalSupport}}}{ = }i_{z_1,z_0}z_2^{-1}\delta_F\left(\frac{F(z_1,\iota z_0)}{z_2}\right)f_{a,b,c}(z_0,F(z_1,\iota z_0))\nonumber\\
 \qquad= i_{z_1,z_0}z_2^{-1}\delta_F\left(\frac{F(z_1,\iota z_0)}{z_2}\right)Y(Y(a,z_0)b,F(z_1,\iota z_0))c.\label{ProofJacobi3}
\end{gather}
Now put \eqref{ProofJacobi1}--\eqref{ProofJacobi3} into the Jacobi identity of Proposition~\ref{PropJacobiDelta}.
\end{proof}

\subsection{Construction of Lie bracket}
\label{s43}

\begin{Theorem}
The formula
\begin{equation}
\label{s4eqn1}
[a,b] = \Res_{z=0}^F Y(a,z)b {\rm d}z
\end{equation}
defines a Lie bracket on the quotient $V/\sum_{n\geqslant1}\mathcal{S}^{(n)}(V)$.
\end{Theorem}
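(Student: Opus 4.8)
The plan is to verify, in turn, bilinearity, that the bracket descends to the quotient $V/W$ where $W=\sum_{n\geqslant1}\mathcal{S}^{(n)}(V)$, antisymmetry, and the Jacobi identity. Bilinearity is immediate from the $R$-linearity of $Y$ and of $\Res_{z=0}^F$. For descent in the first slot I would first record the identity
\[
Y\bigl(\mathcal{S}^{(n)}(a),z\bigr)b=\mathcal{S}_n^F\bigl(Y(a,\cdot)b\bigr)(z),
\]
obtained by expanding the $F$-translation covariance \eqref{s3eqn5} in powers of $w$ and comparing with the definition \eqref{def-F-hyper} of the $F$-hyperderivative. Then for $a=\mathcal{S}^{(n)}(a')$ with $n\geqslant1$ the bracket $[a,b]=\Res_{z=0}^F\mathcal{S}_n^F\bigl(Y(a',\cdot)b\bigr)(z)\,{\rm d}z$ vanishes by \eqref{s2eqn9}, so the bracket annihilates $W$ in its first argument.

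For antisymmetry I would substitute skew symmetry \eqref{s3eqn8} into \eqref{s4eqn1}, giving $[a,b]=\Res_{z=0}^F\mathcal{S}(z)Y(b,\iota(z))a\,{\rm d}z$, and split $\mathcal{S}(z)=\id_V+\sum_{n\geqslant1}\mathcal{S}^{(n)}z^n$ using $\mathcal{S}(0)=\id_V$. Since $\Res_{z=0}^F$ is $R$-linear and commutes with each $\mathcal{S}^{(n)}$, the higher terms contribute a finite sum of elements of $W$, while the leading term gives $\Res_{z=0}^FY(b,\iota(z))a\,{\rm d}z=-\Res_{z=0}^FY(b,z)a\,{\rm d}z=-[b,a]$ by the inversion rule \eqref{residue-inversion}. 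Hence $[a,b]\equiv-[b,a]\pmod W$, which also yields descent in the second slot; so the bracket is a well-defined antisymmetric operation on $V/W$.

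The substantial step is the Jacobi identity, and here the plan is to apply the three-variable $F$-residue identity \eqref{s2eqn12} to the $V$-valued series $\phi_{a,b,c}(z_0,z_1,z_2)=p_{a,b,c}(z_0,z_2)z_1^{-N}$ built from Proposition~\ref{prop:pabc}, whose convergence hypotheses follow from the meromorphicity of $p_{a,b,c}$. Using $F(F(z_1,\iota z_2),z_2)=z_1$ and $F(z_0,F(z_1,\iota z_0))=z_1$ one has $\phi_{a,b,c}(F(z_1,\iota z_2),z_1,z_2)=f_{a,b,c}(F(z_1,\iota z_2),z_2)$ and $\phi_{a,b,c}(z_0,z_1,F(z_1,\iota z_0))=f_{a,b,c}(z_0,F(z_1,\iota z_0))$, cf.\ \eqref{fABC}. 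The two ``commutator'' terms of \eqref{s2eqn12} then become $[a,[b,c]]$ and $[b,[a,c]]$: the expansion formulas \eqref{ProofWC2} and \eqref{ProofWC3} (together with weak commutativity relating $f_{a,b,c}$ and $f_{b,a,c}$) identify the $i_{z_1,z_2}$- and $i_{z_2,z_1}$-expansions with $Y(a,z_1)Y(b,z_2)c$ and $Y(b,z_2)Y(a,z_1)c$, and the inner single-variable $F$-residues collapse $Y(b,z_2)c$ and $Y(a,z_1)c$ to $[b,c]$ and $[a,c]$.

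The main obstacle is the remaining ``iterate'' term of \eqref{s2eqn12}, which by the substitution used for \eqref{s3eqn12} equals $\Res_{z_1=0}^F\Res_{z_0=0}^Fi_{z_1,z_0}Y(Y(a,z_0)b,F(z_1,\iota z_0))c\,{\rm d}z_0\,{\rm d}z_1$, and must be identified with $[[a,b],c]$. To do this I would again invoke $F$-translation covariance, now for $d=Y(a,z_0)b$, to write $i_{z_1,\underline{z_0}}Y(d,F(z_1,\iota z_0))c=\sum_{n\geqslant0}\mathcal{S}_n^F\bigl(Y(d,\cdot)c\bigr)(z_1)(\iota z_0)^n$; since $\Res_{z_1=0}^F$ kills every $n\geqslant1$ summand by \eqref{s2eqn9}, only the $n=0$ term survives and the $z_1$-residue reduces the iterate term to $\Res_{z_0=0}^F\Res_{z_1=0}^FY(Y(a,z_0)b,z_1)c\,{\rm d}z_1\,{\rm d}z_0$. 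Because $Y(Y(a,z_0)b,z_1)c$ has only finitely many negative powers in each of $z_0$ and $z_1$ separately, this double residue is order-independent and equals $\Res_{z_1=0}^FY([a,b],z_1)c\,{\rm d}z_1=[[a,b],c]$. Feeding the three identifications into \eqref{s2eqn12} then gives $[[a,b],c]=[a,[b,c]]-[b,[a,c]]$, which together with antisymmetry is the Jacobi identity on $V/W$. Throughout, the genuine technical care lies in tracking the expansion orders $i_{z_i,z_j}$ (and the holomorphic variant $i_{z_i,\underline{z_j}}$ needed to apply \eqref{s2eqn9}) and in justifying the Fubini-type interchanges of iterated $F$-residues from the meromorphicity of Proposition~\ref{prop:pabc}.
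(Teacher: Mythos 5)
Your proof is correct and follows essentially the same route as the paper: descent to the quotient via the identity $Y\bigl(\mathcal{S}^{(n)}(a),z\bigr)b=\mathcal{S}_n^F\bigl(Y(a,\cdot)b\bigr)(z)$ together with \eqref{s2eqn9}, antisymmetry via skew symmetry \eqref{s3eqn8} and \eqref{residue-inversion}, and the Jacobi identity by feeding $\phi_{a,b,c}(z_0,z_1,z_2)=p_{a,b,c}(z_0,z_2)z_1^{-N}$ into the iterated $F$-residue formula \eqref{s2eqn12} and identifying the three terms through \eqref{ProofWC2}, \eqref{ProofWC3} and \eqref{s3eqn12}. The only (harmless) deviations are that you deduce second-slot descent from antisymmetry rather than from the operator identity \eqref{s3eqn9}, and that you evaluate the iterate term of \eqref{s2eqn12} as $[[a,b],c]$ by expanding $Y(Y(a,z_0)b,F(z_1,\iota z_0))c$ in $F$-hyperderivatives and killing the $n\geqslant1$ terms with \eqref{s2eqn9}, whereas the paper uses \eqref{wcc-eqn} to rewrite that term as $Y(Y(b,\iota z)a,v)c$ and lands on $-[[b,a],c]$ --- the two outcomes agree given the antisymmetry you have already established.
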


\begin{proof}
We first show that \eqref{s4eqn1} descends to the quotient by proving
\begin{equation}
\label{s4eqn2}
\Res^F_{z=0} Y\bigl(\mathcal{S}^{(m)}(a),z\bigr)b {\rm d}z=\frac{1}{m!}\left.\frac{d^m}{{\rm d}w^m}\right|_{w=0} \Res_{z=0}^F Y(\mathcal{S}(w)(a),z)b {\rm d}z
=0
\end{equation}
for $m\geqslant1$. This will also prove
\[
 \Res_{z=0}\biggl(\frac{Y(a,z)\mathcal{S}^{(m)}(b)}{F^{1,0}(0,z)}\biggr){\rm d}z\in\sum_{n\geqslant1}\mathcal{S}^{(n)}(V),
\]
since by \eqref{s3eqn5}, \eqref{s3eqn9} we have
\begin{align*}
Y(a,z)\circ\mathcal{S}(w)&=\mathcal{S}(w)\circ Y(a,F(z,\iota(w)))=\mathcal{S}(w)\circ Y(\mathcal{S}(\iota(w))a,z)\\
&=\sum_{k,\ell\geqslant0} \mathcal{S}^{(k)}\circ Y\big(\mathcal{S}^{(\ell)}(a),z\big)w^k\iota(w)^\ell,
\end{align*}
which expresses $Y(a,z)\mathcal{S}^{(m)}(b)$ for $m\geqslant 1$ as a sum of terms with $k\geqslant1$ or with $\ell\geqslant1$. The terms with $k\geqslant1$ belong to $\sum_{n\geqslant1}\mathcal{S}^{(n)}(V)$, while the terms with $\ell\geqslant1$ are covered by \eqref{s4eqn2}.

We show \eqref{s4eqn2} by applying Theorem~\ref{s2thm1}. Since
\begin{align*}
 \Res_{z=0}^F Y(\mathcal{S}(w)a,z)b {\rm d}z&=\Res_{z=0}^F Y(a,F(z,w))b {\rm d}z=\sum_{n\geqslant0}\bigl(\Res_{z=0}^F\mathcal{S}_n Y(a,z)b\bigr) w^n {\rm d}z\\
 &=\Res_{z=0}^F Y(a,z)b {\rm d}z,
\end{align*}
the series is constant in $w$, as required.

Clearly the bracket is bilinear. We prove the Jacobi identity. Using the notation of Proposition~\ref{prop:pabc}, set \smash{$f_{a,b,c}(z,v,w)=\frac{p_{a,b,c}(z,w)}{v^N}$}. Then
\begin{align*}
 i_{v,w}[f_{a,b,c}(z,v,w)|_{z\to F(v,\iota(w))}]&=Y(a,v)Y(b,w)c,\\
 i_{w,v}[f_{a,b,c}(z,v,w)|_{z\to F(v,\iota(w))}]&=Y(b,w)Y(a,v)c.
\end{align*}
On the other hand, putting $w=F(v,\iota(z))$ into \eqref{wcc-eqn} implies the first step in
\[
 i_{v,z}[f_{a,b,c}(z,v,w)|_{w\to F(v,\iota(z))}]=i_{v,z}\frac{p_{b,a,c}(\iota(z),v)}{F(\iota(z),v)^N}
 \overset{\eqref{s3eqn12}}{=} Y(Y(b,\iota z)a,v)c.
\]
Putting these calculations into the formula \eqref{s2eqn12} for iterated $F$-residues with $(z_0,z_1,z_2)\to(z,v,w)$ and using \eqref{residue-inversion} yields a version of the Jacobi identity
\[
 [a,[b,c]]-[b,[a,c]]=-[[b,a],c].
\]
Assuming a trivial center (meaning that $[d,c]=0$ for all $c$ implies $d=0$), this Jacobi identity implies also the skew symmetry of the Lie bracket. In general, \eqref{F-shift} and \eqref{s3eqn8} imply
\[
 Y(a,z)b + \sum_{n\geqslant1}\mathcal{S}^{(n)}(V) = Y(b,\iota(z))a + \sum_{n\geqslant1}\mathcal{S}^{(n)}(V)
\]
and then the skew symmetry follows from \eqref{residue-inversion}.
\end{proof}

\begin{Remark}
For ordinary homology, the moduli space of the quiver given by the Dynkin diagram of a Lie algebra recovers the original Lie algebra. What happens in the case of elliptic homology?
\end{Remark}

\subsection[Example: Heisenberg vertex F-algebra]{Example: Heisenberg vertex $\boldsymbol{F}$-algebra}
\label{s44}

The simplest new examples are generated by a single vertex operator. These are the Heisenberg vertex $F$-algebras. Define a central extension
\[
 0\longrightarrow R\longrightarrow H_F\longrightarrow R(\!( t)\!)\longrightarrow0
\]
of the commutative Lie algebra $R(\!( t)\!)$ by the cocycle
\[
 c(f,g)=\Res^F_{z=0} \mathcal{S}_1^F(f)\cdot g=\Res_{z=0} f'g.
\]
Notice~\eqref{s2eqn7} here. Set $b_n=t^n$. Then
\begin{equation}
\label{Heisenberg_Commutation_Rules}
 [b_n,b_m]=n\delta_{n,-m},
\end{equation}
so we have obtained the ordinary \emph{Heisenberg Lie algebra} except for having replaced $\mathbb{C}$ by the ring $R$. Define the space of states
$
 V=R[b_{-1},b_{-2},\dots]$.
Write $\mathbbm{1}\in V$ for the constant series $1$, the \emph{vacuum}. Define the vertex operator
\[
 b(z)=\sum_{n\in\mathbb{Z}} b_nz^{-n-1},
\]
where
\[
 b_n\colon\ V\longrightarrow V,\qquad
 b_n
 =
 \begin{cases}
 b_n\cdot(-) &(n<0),\\
 n\dfrac{\partial}{\partial b_{-n}} & (n\geqslant0).
 \end{cases}
\]
These operators satisfy the commutation rules~\eqref{Heisenberg_Commutation_Rules}. We seek an $F$-translation operator
\[
 \mathcal{S}(z)=\sum_{n\geqslant0} \mathcal{S}^{(n)}z^n,\qquad \mathcal{S}^{(n)}\colon\ V\longrightarrow V,
\]
satisfying $\mathcal{S}(z)\mathbbm{1}=\mathbbm{1}$ and
$
 [\mathcal{S}(z), b(w)]=b(F(z,w))$.
These conditions actually determine $\mathcal{S}(z)$ completely. Written in components,
\[
 \big[\mathcal{S}^{(n)},b_m\big] = \text{coefficient of $w^m$ in }\bigl(\mathcal{S}^{(n)}_Fb\bigr)(w).
\]
This can be applied inductively to a state $b_{j_1}\cdots b_{j_k}$ until we reach $\mathcal{S}^{(n)}\mathbbm{1}=0$.

\subsection*{Acknowledgements}

The author would like to thank an anonymous referee for several suggestions, in particular for the referee's comments on Remark~\ref{ConjecturalRemark}.

\pdfbookmark[1]{References}{ref}
\LastPageEnding

\end{document}